\theoremstyle{change}%
\newtheorem{definition}{Definition:}[section]%
\newtheorem{proposition}[definition]{Proposition:}%
\newtheorem{theorem}[definition]{Theorem:}%
\newtheorem{lemma}[definition]{Lemma:}%
{\theorembodyfont{\rmfamily}\newtheorem{remark}[definition]{Remark:}}%
{\theorembodyfont{\rmfamily}}%
\newenvironment{proof}
{{\bf Proof:}}
{\qquad \hspace*{\fill} $\Box$}%
\newcommand{\fh}{\mathfrak{h}}%
\newcommand{\tr}{\operatorname{tr}}%
\newcommand{\inner}{\operatorname{int}}%
\newcommand{\rme}{\mathrm{e}}%
\newcommand{\CC}{\mathcal{C}}%
\newcommand{\OC}{\mathcal{O}}%
\newcommand{\UC}{\mathcal{U}}%
\newcommand{\DC}{\mathcal{D}}%
\newcommand{\PC}{\mathcal{P}}
\newcommand{\HB}{\mathbb{H}}%
\newcommand{\R}{\mathbb{R}}%
\begin{document}

\title{A complete classification of control sets for singular linear control systems on the Heisenberg group}
\author{Adriano Da Silva\thanks{Supported by ``Fortalecimiento Grupos de Investigación UTA N$\textsuperscript{\underline{o}}$ 8802-25''} \\
Departamento de Matemática\\
Universidad de Tarapac\'{a}, Iquique, Chile \and
Okan Duman\\
Department of Mathematics \\
Yildiz Technical University - Istanbul, Turkey.\\
\and Anderson Felipe Penagos Rojas \\
Instituto de Matem\'{a}tica\\
Universidade Estadual de Campinas, Brazil\\
}
\date{\today }
\maketitle

\begin{abstract}
In this paper, we investigate the control sets of linear control systems on the Heisenberg group associated with singular derivations. Under the Lie algebra rank condition, we provide a complete characterization of these sets by analyzing the trace and determinant of an associated $2 \times 2$ submatrix.
\end{abstract}

\section{Introduction}
Understanding the dynamics of control systems is a key issue in modern mathematics and engineering. Traditional studies have largely focused on linear control systems (abbrev. LCSs) in Euclidean spaces, where many techniques have been well developed and have direct physical applications (see, for example, references \cite{FCWK, leit, markus, shell}). However, when the underlying space possesses non-trivial geometry, for instance, when the state space is a manifold or a Lie group; the dynamics can exhibit behaviours that have no analogue in the Euclidean setting.

In recent decades, there has been growing interest in studying LCSs on Lie groups because of their deep connections with differential geometry, representation theory, and nonlinear dynamics. The first significant step in this direction was made by L. Markus in \cite{markus}, who extended the framework of LCSs to matrix groups. Later, V. Ayala and J. Tirao \cite{VAJT} generalized this concept to arbitrary Lie groups, establishing a unifying geometric perspective. A further motivation was provided by P. Jouan in \cite{JPh1}, who showed that every control-affine system with complete vector fields generating a finite-dimensional Lie algebra is equivalent to a LCS on a Lie group or a homogeneous space. These results demonstrate that the study of control systems on Lie groups is not merely an abstract generalization but rather a natural extension of the Euclidean theory.  In the absence of a comprehensive global theory with general hypotheses, several studies of LCSs on various state spaces, including nilpotent, solvable, simple, semi-simple, compact/non-compact, abelian Lie groups and the direct and semi-direct product between them, as well as flag manifolds, were conducted (see \cite{DSAyEM, DSAy, DS, DS1, labsanmartin} and references therein) providing some insight on the dynamical behaviour of such systems.

One of the key tools in analyzing the dynamical properties of such systems is the concept of control sets in both topological and/or algebraic sense, which are maximal regions of the state space where approximate controllability holds. Within these regions, the system can be steered arbitrarily close to any point by selecting suitable control functions. In the Euclidean setting, control sets are often straightforward to describe, but for Lie groups, their structure is much richer and closely tied to the algebraic or geometric properties of the group. It should be noted that determining controllability property, characterizing eventual topological properties of control sets of all LCSs becomes highly non-trivial job. For example, even in the case of low-dimensional groups, the properties of control sets for such dynamics on Lie groups and homogeneous spaces might differ significantly (see \cite{VAD1, VAD2, PJMD}). In particular, for LCSs on nilpotent Lie groups, the properties of control sets is strongly influenced by the eigenvalues of a derivation associated with the drift vector field. Consequently, understanding the dynamics on this type of Lie groups is essential for gaining insights into the dynamics on more general Lie groups. In this manner, the Heisenberg group is a particularly interesting setting in which to explore these ideas. As a three-dimensional, nilpotent but non-abelian Lie group, it combines a simple structure with a highly non-trivial geometry. Given its central role in areas such as harmonic analysis, quantum mechanics and sub-Riemannian geometry, it is an ideal testing ground for understanding how group structure influences controllability. Indeed, for the regular case (where the related derivation is invertible), one of the previous works \cite{DS1} assures the existence of control sets with nonempty interior under the Lie Algebra Rank Condition (LARC). However, a critical gap remains: the LARC, while powerful, is not sufficient to guarantee the existence of such control sets in all situations. As the examples in this paper will demonstrate, the singular case (where the related derivation is non-invertible) presents a fundamentally different and more complex landscape, where controllability can either be fully achieved or entirely disintegrate. It is this unexplored and challenging singular problem that our study addresses.

In this paper, we provide a complete classification of control sets for singular LCSs on the Heisenberg group, where the associated derivation w.r.t the drift vector field is non-invertible. By leveraging the group's automorphisms to conjugate the system into simplified normal forms, we conduct a meticulous analysis based on the fundamental invariants of the derivation: the trace ($\tr A$) and determinant ($\det A$) of its $2 \times 2$ submatrix $A$. Our results reveal a rich variety of behaviors: in the case $\det A = \tr A = 0$, we uncover a distinct dichotomy: the system either exhibits global controllability (with $\mathbb{H}$ as the unique control set) or a complete breakdown of controllability, resulting in a continuum of one-point control sets (equilibria), dictated by the ad-rank condition. Furthermore, for the case $\det A \neq 0$ and $\tr A = 0$, the characterization depends on the spectrum of $A$ and the parameter $\alpha$.  When the ad-rank condition holds ($\alpha \neq 0$), the only control set is the cylinder $\mathcal{C}_{\mathbb{R}^2} \times \mathbb{R}$. If the ad-rank condition fails ($\alpha = 0$), the outcome is determined by the eigenvalues: for pure imaginary eigenvalues, the cylinder remains the control set, while for real eigenvalues, controllability collapses into a line of one-point control sets. Finally, for the case $\det A = 0$ and $\tr A \neq 0$, we demonstrate that the unique control set is the preimage $\pi^{-1}(\mathcal{C}_{\mathbb{R}^2}^A)$ of a control set from an associated affine system on $\mathbb{R}^2$. Through a detailed case-by-case study, this work uncovers the intricate and sometimes unpredictable controllability patterns that arise in the singular setting, provides a comprehensive analytical framework tailored to the Heisenberg group, and lays a concrete foundation for extending the study of singular LCSs to higher-dimensional nilpotent Lie groups.

\section{Preliminaries}
This section introduces some foundational concepts from dynamical systems to help readers understand the rest of the paper. First, we introduce control-affine systems on smooth manifolds and present several key results that will be referenced later.
\subsection{Control-affine systems and controllability}
Consider a smooth $C^\infty$ manifold  $M$ of finite dimension and the Euclidean space  $\mathbb{R}^{m}$. Let  $\Omega \subset \mathbb{R}^{m}$ be a compact, convex subset whose interior contains the origin. A control-affine system on  $M$ is then given by the family of ordinary differential equations
\begin{equation*}
\Sigma _{M}:\quad \dot{x}(\tau)=f_{0}(x(\tau))+\sum_{j=1}^{m}u_{j}(\tau)f_{j}(x(\tau)),%
\quad \mathbf{u}\in \mathcal{U},
\end{equation*}%
where $f_{0},f_{1},\ldots ,f_{m}$ are smooth vector fields defined on $M$
and the control parameter $\mathbf{u}=\left( u_{1},\ldots ,u_{m}\right) $ belongs to the set $\mathcal{U}$ of the piecewise constant functions such that $\mathbf{u}(t)\in \Omega $.

Given an initial state $x\in M$ and a control $\mathbf{u}\in\mathcal{U}$, the system $\Sigma_M$ admits a unique solution $\tau\mapsto \varphi (\tau,x,\mathbf{u})
$ which is an absolutely continuous curve on $M$ satisfying $ \varphi(0,x,\mathbf{u})=x$
 and whose derivative almost everywhere agrees with the right‑hand side of $\Sigma_M$. Associated to $\Sigma _{M}$ we have
for a given $x\in M$, the positive/negative orbits at $x$ as follows:
\begin{equation*}
\mathcal{O}^{\pm}(x) =\{\varphi (\pm\tau,x,\mathbf{u}):\tau\geq 0,\mathbf{u}\in \mathcal{U}\}. 
\end{equation*}

\begin{definition}
A control-affine system $\Sigma_M$ is said to satisfy the Lie Algebra Rank Condition (LARC) if
$\mathcal{L}(x) = T_xM $ \text{for all } $x\in M,$
where $\mathcal{L}$ denotes the smallest Lie subalgebra of the space of smooth vector fields on $M$ that contains the vector fields $f_0, f_1, \dots, f_m$. The system $\Sigma_M$ is said to be controllable if, for every $x\in M$, the positive orbit $\mathcal{O}^+(x)$ of $x$ coincides with the entire manifold; that is, $\mathcal{O}^+(x) = M $ \text{for all } $x\in M .$ 
\end{definition}

Since achieving global controllability is generally difficult, it is natural to investigate the existence of maximal subsets of the state space in which controllability holds. In control theory, these subsets are known as control sets, which are defined below.
\begin{definition}
\label{controlset}
A nonempty set $\mathcal{C}\subset M$ is a control set of $\Sigma _{M}$ if it is
maximal, w.r.t. set inclusion, with the following properties:
\begin{enumerate}
\item[(1)] $\forall x\in \mathcal{C}$, there exists a control $\mathbf{u}\in \mathcal{U}$
such that $\varphi \left( \mathbb{R}^{+},x,\mathbf{u}\right) \subset \mathcal{C}$;
\item[(2)]  It holds that $\mathcal{C}\subset \mathrm{cl}~\mathcal{O}^{+}(x)$ for
all $x\in \mathcal{C}$.
\end{enumerate}
\end{definition}
As shown in \cite[Proposition 3.2.4]{FCWK}, any subset $\mathcal{C} \subset M$ with nonempty interior that is maximal with respect to property (2) above constitutes a control set. These sets provide a natural approach to analyzing essential dynamical features of the system, such as equilibrium points, recurrent behavior, periodic trajectories, and bounded orbits. Moreover, if the system satisfies the LARC, then exact controllability holds in the interior of any control set (see \cite[Theorem 3.1.5]{FCWK}).   In fact that under the LARC, a precise relationship holds between the local structure of reachable sets and control sets. Specifically, a point $x \in M$ belongs to the interior of its positive orbit, $\operatorname{int} \mathcal{O}^+(x)$, if and only if it lies in the intersection $\operatorname{int} \mathcal{O}^+(x) \cap \operatorname{int} \mathcal{O}^-(x)$, and this is further equivalent to $x$ being in the interior of a control set $\mathcal{C} $. In this case, the control set $\mathcal{C} $ can be characterized as the intersection $\overline{\mathcal{O}^+(x)} \cap \mathcal{O}^-(x)$, capturing the maximal region around $x$ where exact controllability holds.
\par
Now, let us turn to the notion of conjugation between control-affine systems, an important tool in developing our main results. A conjugation simplifies the dynamical analysis by enabling coordinate changes on the state space while preserving the essential control-theoretic structures. Let $\Sigma_M$ and $\Sigma_N$ denote control-affine systems on the smooth manifolds $M$ and $N$, respectively, with associated families of vector fields $\mathbf{f} = (f_0, f_1, \ldots, f_m)$ and $\mathbf{g} = (g_0, g_1, \ldots, g_m)$. We now state the following

\begin{definition}
Let $\psi: M \to N$ be a smooth map. A vector field $X$ on $M$ and a vector field $Y$ on $N$ are $\psi$-conjugated (or $\psi$-related) if
$$d\psi \circ X = Y \circ \psi.$$
In particular, two control-affine systems $\Sigma_M$ and $\Sigma_N$ with vector fields $\mathbf{f}$ on $M$ and $\mathbf{g}$ on $N$ are said to be $\psi$-conjugated if
$$d\psi \circ f_j = g_j \circ \psi, \quad \text{for each } j=0, \ldots, m.$$
If $\psi$ is a diffeomorphism, the systems $\Sigma_M$ and $\Sigma_N$ are called equivalent.
\end{definition}
Several important properties of equivalent systems, including controllability, topological characteristics of positive and negative orbits, and control set structure, are preserved under conjugation. The following result illustrates the relationship between the control sets of conjugated systems \cite[Proposition 2.4]{DDK}.

\begin{proposition}\label{CCC}
Let $\Sigma_M$ and $\Sigma_N$ be $\psi$-conjugated systems satisfying the LARC. Then the the followings are satisfied:
\begin{itemize}
    \item[(1)] If $\mathcal{C}_M$ is a control set of $\Sigma_M$, there exists a control set $\mathcal{C}_N$ of $\Sigma_N$ such that $\psi\left(\mathcal{C}_M\right) \subset \mathcal{C}_N$
    \item[(2)] If for some $y_0 \in \operatorname{int} \mathcal{C}_N$ it holds that $\psi^{-1}\left(y_0\right) \subset \operatorname{int} \mathcal{C}_M$, then $\mathcal{C}_M=\psi^{-1}\left(\mathcal{C}_N\right)$.    
\end{itemize}
\end{proposition}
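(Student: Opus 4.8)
The backbone of both parts is the elementary fact that $\psi$-conjugation transports solution curves forward. Writing $\varphi_M,\varphi_N$ for the two flows, the relations $d\psi\circ f_j=g_j\circ\psi$ force $\psi\bigl(\varphi_M(\tau,x,\mathbf{u})\bigr)=\varphi_N(\tau,\psi(x),\mathbf{u})$ for every admissible $\mathbf{u}$ and every $\tau$, whence $\psi\bigl(\mathcal{O}^{\pm}_M(x)\bigr)\subseteq\mathcal{O}^{\pm}_N(\psi(x))$. The plan is to record this identity first, since everything below rests on it.

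For part (1), I would show that the image $\psi(\mathcal{C}_M)$ already satisfies the two defining properties of a control set in $N$ and then invoke maximality. Property (1) is immediate: given $y=\psi(x)$ with $x\in\mathcal{C}_M$, choose $\mathbf{u}$ keeping $\varphi_M(\mathbb{R}^+,x,\mathbf{u})$ inside $\mathcal{C}_M$; pushing forward by $\psi$ keeps $\varphi_N(\mathbb{R}^+,y,\mathbf{u})$ inside $\psi(\mathcal{C}_M)$. Property (2) follows from continuity of $\psi$: if $x,x'\in\mathcal{C}_M$ then $x'\in\operatorname{cl}\mathcal{O}^+_M(x)$, so $\psi(x')\in\psi\bigl(\operatorname{cl}\mathcal{O}^+_M(x)\bigr)\subseteq\operatorname{cl}\mathcal{O}^+_N(\psi(x))$. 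A nonempty set with properties (1)--(2) is contained in a maximal such set (the union of a chain of sets with these two properties again has them, so Zorn's lemma applies), and that maximal set is by definition a control set $\mathcal{C}_N\supseteq\psi(\mathcal{C}_M)$.

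For part (2), the inclusion $\mathcal{C}_M\subseteq\psi^{-1}(\mathcal{C}_N)$ is free from part (1). For the reverse inclusion I would exploit the LARC characterization recalled in the preliminaries: fixing $x_0\in\psi^{-1}(y_0)\subseteq\operatorname{int}\mathcal{C}_M$ (so $\psi(x_0)=y_0\in\operatorname{int}\mathcal{C}_N$), one has $\mathcal{C}_M=\operatorname{cl}\mathcal{O}^+_M(x_0)\cap\mathcal{O}^-_M(x_0)$ and $\mathcal{C}_N=\operatorname{cl}\mathcal{O}^+_N(y_0)\cap\mathcal{O}^-_N(y_0)$. Take $z$ with $\psi(z)\in\mathcal{C}_N$. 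The decisive manoeuvre is the lift of an \emph{exact} trajectory: from $\psi(z)\in\mathcal{O}^-_N(y_0)$ there is a control steering $\psi(z)$ to $y_0$ in finite time in $N$; running the same control in $M$ from $z$ produces a point $z'$ with $\psi(z')=y_0$, hence $z'\in\psi^{-1}(y_0)\subseteq\operatorname{int}\mathcal{C}_M$. Concatenating with the exact interior controllability between $z'$ and $x_0$ (valid under the LARC) yields $z\in\mathcal{O}^-_M(x_0)$.

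The hard part will be the complementary relation $z\in\operatorname{cl}\mathcal{O}^+_M(x_0)$, which is what finally places $z$ in $\mathcal{C}_M$. Here the lift fails directly, since conjugation transports trajectories only from $M$ to $N$, so the approximate reachability $\psi(z)\in\operatorname{cl}\mathcal{O}^+_N(y_0)$ carried by $\psi(z)\in\mathcal{C}_N$ does not lift as such. I would handle the preimages of \emph{interior} points of $\mathcal{C}_N$ by repeating the exact-lifting argument verbatim for the time-reversed systems --- which are again $\psi$-conjugated, again satisfy the LARC, and still have $y_0$ in the interior of the corresponding control set, since passing to the reversed system leaves the interior of a control set unchanged --- thereby obtaining $z\in\mathcal{O}^+_M(x_0)$; and I would reach the remaining points (those over $\partial\mathcal{C}_N$) by a limiting argument, approximating $\psi(z)$ by points of $\mathcal{O}^+_N(y_0)$, lifting the associated controls to trajectories of $\Sigma_M$ issuing from $x_0$, and correcting their endpoints inside the fibre $\psi^{-1}(y_0)\subseteq\operatorname{int}\mathcal{C}_M$ using that the reachable sets of $\Sigma_M$ have nonempty interior under the LARC. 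With both $z\in\operatorname{cl}\mathcal{O}^+_M(x_0)$ and $z\in\mathcal{O}^-_M(x_0)$ in hand, the characterization forces $z\in\mathcal{C}_M$, giving $\psi^{-1}(\mathcal{C}_N)\subseteq\mathcal{C}_M$ and hence the asserted equality.
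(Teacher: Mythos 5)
You should note at the outset that the paper does not actually prove this proposition: it is imported verbatim from \cite[Proposition 2.4]{DDK}, so your argument has to stand on its own. Most of it does. The transport identity $\psi(\varphi_M(\tau,x,\mathbf{u}))=\varphi_N(\tau,\psi(x),\mathbf{u})$ and the Zorn-type maximality argument for part (1) are correct (for the inclusion $\mathcal{C}_M\subseteq\psi^{-1}(\mathcal{C}_N)$ in part (2) you should add one line: the control set produced by part (1) contains $y_0=\psi(x_0)$, and control sets sharing a point coincide, so it \emph{is} $\mathcal{C}_N$). The core of part (2) is also sound: the characterization $\mathcal{C}=\operatorname{cl}\mathcal{O}^+(x_0)\cap\mathcal{O}^-(x_0)$ for $x_0\in\operatorname{int}\mathcal{C}$, the exact lift of a trajectory steering $\psi(z)$ to $y_0$ (this is precisely where the hypothesis on the \emph{whole} fiber $\psi^{-1}(y_0)\subset\operatorname{int}\mathcal{C}_M$ is used, since the lifted endpoint lands somewhere unspecified in that fiber), and the time-reversal step for interior points: the reversed systems are indeed $\psi$-conjugated, the LARC is preserved, and interiors of control sets are unchanged under time reversal, so your conclusion $\psi^{-1}(\operatorname{int}\mathcal{C}_N)\subset\mathcal{O}^+_M(x_0)$ is correct.

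The genuine gap is the boundary case, and your sketched repair would fail as described. Lifting controls that approximate $\psi(z)$ from $y_0$ produces endpoints over points $y_n\to\psi(z)$ whose fiber components are completely uncontrolled, and the only place where you have any fiberwise freedom is the fiber over $y_0$, at the \emph{start} of the trajectory. In a general $\psi$-conjugated pair there is no reason that varying the initial point inside $\psi^{-1}(y_0)$ moves the endpoint through (or even near) the prescribed point $z$: that kind of fiber transitivity holds for the linear systems on $\HB$, where the fiber dynamics is affine with invertible scaling, but it is not a consequence of the hypotheses, and ``reachable sets have nonempty interior'' does not supply it, because the correction is needed near $z$, where you have no controllability whatsoever. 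The clean finish needs no lifting of approximating trajectories at all. Since $\psi$-related vector fields have $\psi$-related brackets, the LARC for \emph{both} systems forces $d\psi_x$ to map $\mathcal{L}_M(x)$ onto $\mathcal{L}_N(\psi(x))=T_{\psi(x)}N$, so $\psi$ is a submersion and in particular an open map. Since $\mathcal{C}_N$ has nonempty interior and $\Sigma_N$ is locally accessible, $\mathcal{C}_N\subset\operatorname{cl}(\operatorname{int}\mathcal{C}_N)$. Now take any neighborhood $U$ of $z$: the set $\psi(U)$ is a neighborhood of $\psi(z)$, hence meets $\operatorname{int}\mathcal{C}_N$, so there is $w\in U$ with $\psi(w)\in\operatorname{int}\mathcal{C}_N$, and by your own reversed-system step $w\in\mathcal{O}^+_M(x_0)$. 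Thus $z\in\operatorname{cl}\mathcal{O}^+_M(x_0)$, which combined with your (correct) proof that $z\in\mathcal{O}^-_M(x_0)$ places $z$ in $\mathcal{C}_M$ and closes the argument.
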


\section{Linear control systems and Heisenberg group}
This section introduces the concept of a linear control system (abbrev. LCS) on a general Lie group and presents some of its fundamental properties. Then, particular attention will be given to how such systems are defined on the Heisenberg group. This group will serve as the main setting for the subsequent analyses.

\begin{definition}
A vector field $\mathcal{X}$ on a connected Lie group $G$ is linear if its flow $\left\{\varphi_\tau\right\}_{\tau \in \mathbb{R}}$ is a 1-parameter subgroup of $\operatorname{Aut}(G)$, the group of all automorphisms of $G$.    
\end{definition}
\begin{remark}\label{linvec}
On any connected Lie group $G$, a linear vector field is always complete and naturally determines a derivation $\mathcal{D} = -\operatorname{ad}(\mathcal{X})$ on the Lie algebra $\mathfrak{g}$, satisfying the Leibniz rule: $\mathcal{D}[X, Y] = [\mathcal{D}X, Y] + [X, \mathcal{D}Y]$ for all $X, Y \in \mathfrak{g}$. Although every linear vector field induces a derivation in this way, the converse holds only when the group is simply connected. In particular, if $G$ is a connected and simply connected nilpotent Lie group, the exponential map $\exp: \mathfrak{g} \to G$ is a global diffeomorphism. This property allows for an explicit reconstruction of the linear vector field associated with a given derivation $\mathcal{D}$. Specifically, the flow $\varphi_\tau$ of the corresponding vector field satisfies $(d\varphi_\tau)_e = e^{\tau\mathcal{D}}$, and for any $Y \in \mathfrak{g}$, one has $\varphi_\tau(\exp Y) = \exp(e^{\tau\mathcal{D}} Y)$. Since the exponential map is invertible in this setting, the drift vector field $\mathcal{X}$ can be explicitly computed from the derivation using the logarithmic map $\log(p) = Y$, where $p \in G$. This relationship between algebraic and geometric structures is important in analyzing linear control systems on nilpotent Lie groups.
\end{remark}

\begin{definition}
A linear control system on $G$ is determined by the family of ODEs
\begin{equation*}
\Sigma _{G}:\quad \dot{x}(\tau)=\mathcal{X}(x(\tau))+\sum_{i=1}^m u_i(\tau) Z_i(x(\tau)),
\quad \mathbf{u}\in \mathcal{U},
\end{equation*}%
where the drift $\mathcal{X}$ is a linear vector field, $Z_i$'s are left-invariant vector fields, and $\mathbf{u}=\left(u_1, \ldots, u_m\right) \in \mathcal{U}$ are control functions as defined previously.
\end{definition}
Thanks to the inherent symmetry of Lie groups, we can verify the LARC for LCSs at the identity element of the group. An LCS $\Sigma_G$ satisfies the LARC if the Lie algebra $\mathfrak{g}$ is the smallest $\mathcal{D}$-invariant subalgebra containing the control vectors $\{Z_1, \ldots, Z_m\}$, where $\mathcal{D}$ is the derivation associated with the drift vector field. A stronger algebraic criterion is the ad-rank condition, which is met when 
$$
\mathfrak{g} = \operatorname{span}\left\{ \mathcal{D}^k Z_j :\ 0 \leq k < \dim \mathfrak{g},\ 1 \leq j \leq m \right\}.
$$
In other words, the ad-rank condition requires that the smallest $\mathcal{D}$-invariant subspace generated by  the vectors $\{Y_1, \ldots, Y_m\}$ spans the entire Lie algebra $\mathfrak{g}$. The main distinction between LARC and the ad-rank condition is that LARC involves closure under Lie brackets (i.e., forming a subalgebra), whereas the ad-rank condition considers only the action of the derivation $\mathcal{D}$, without requiring bracket closure. Moreover, if the ad-rank condition holds, the system is locally controllable at the identity. 

\subsection{LCSs on Heisenberg Group}
This section presents the structure of the Heisenberg group and its Lie algebra. It describes the forms of invariant and linear vector fields and introduces the expression of linear control systems on the group. Additionally, it outlines sufficient conditions for controllability, with a focus on the Ad-rank condition and the LARC of the obtained systems.
\par
The {\it Heisenberg group} $\HB$ is by definition $\HB:=(\R^2\times\R, *)$ with 
$$(\mathbf{v}_1, z_1)*(\mathbf{v}_2, z_2):=\left(\mathbf{v}_1+\mathbf{v}_2, z_1+z_2+\frac{1}{2}\omega(\mathbf{v}_1, \mathbf{v}_2)\right),$$
where $\omega(\mathbf{v}_1, \mathbf{v}_2):=\det(\mathbf{v}_1|\mathbf{v}_2)$ is the determinant of the matrix having $\mathbf{v}_1, \mathbf{v}_2$ as columns, that is, the unique (up to nonzero constant multiplication) nondegenerated, skew symmetric bilinear form on $\R^2$.  It is a standard fact that the $\HB$ is in fact a Lie group and, up to an isomorphism, is the unique three-dimensional nilpotent, nonabelian simply connected Lie group.

The Lie algebra $\fh$ of $\HB$ is given by $\fh:=(\R^2\times \R, [\cdot, \cdot])$ with
$$[(\zeta_1, \alpha_1), (\zeta_2, \alpha_2)]:=\left(\mathbf{0}, \omega(\zeta_1, \zeta_2)\right).$$
The next result presents the structure of the Lie algebra derivations that are used to determine linear vector fields, as well as the group automorphisms that govern transitions between different control systems on the group.
\begin{proposition}\label{deraut}
Assume that $GL(2)$ denotes the Lie group of $2 \times 2$ invertible matrices with Lie algebra $\mathfrak{g l}(2)$. Then the explicit form of a derivation $\mathcal{D}$ for $\mathfrak{h}$ and an automorphism for $\mathbb{H}$ both in matrix form w.r.t. the standard basis is as follows:
$$
\mathcal{D}=\left(\begin{array}{cc}
A & \mathbf{0} \\
\eta^{\top} & \operatorname{tr} A
\end{array}\right) \in \operatorname{Der}(\mathfrak{h}) \quad \text { and } \quad \mathcal{P}=\left(\begin{array}{cc}
P & \mathbf{0} \\
\xi^{\top} &\operatorname{det} P
\end{array}\right) \in \operatorname{Aut}(\mathbb{H}) .
$$
where $P \in \mathrm{GL}(2), A \in \mathfrak{g} \mathfrak{l}(2)$, and $\eta, \xi \in \mathbb{R}^2$.
\end{proposition}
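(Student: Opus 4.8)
The plan is to treat both statements in parallel, writing each linear map in block form with respect to the splitting $\fh = \R^2 \oplus \R$ (equivalently $\HB = \R^2 \times \R$) and exploiting that the one-dimensional center $\fz = [\fh,\fh] = \{(\mathbf{0},z) : z \in \R\}$ is characteristic. Concretely, I would write a linear map as $\left(\begin{smallmatrix} A & b \\ \eta^\top & d \end{smallmatrix}\right)$, so that it sends $(\zeta,\alpha)$ to $(A\zeta + \alpha b,\ \eta^\top \zeta + d\alpha)$. The entire argument then rests on two elementary identities for the form $\omega = \det(\,\cdot\,|\,\cdot\,)$ on $\R^2$: the multiplicative identity $\omega(P\zeta_1, P\zeta_2) = (\det P)\,\omega(\zeta_1,\zeta_2)$, and its infinitesimal counterpart $\omega(A\zeta_1,\zeta_2) + \omega(\zeta_1,A\zeta_2) = (\tr A)\,\omega(\zeta_1,\zeta_2)$; the latter follows from the former by substituting $P = e^{tA}$, using $\det e^{tA} = e^{t\,\tr A}$, and differentiating at $t=0$.

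For the derivation, I would first observe that any derivation $\mathcal{D}$ maps the derived algebra $[\fh,\fh] = \fz$ into itself, so $\mathcal{D}(\mathbf{0},1) \in \fz$ forces $b = \mathbf{0}$. Writing $X = (\zeta_1,\alpha_1)$, $Y = (\zeta_2,\alpha_2)$ and expanding both sides of the Leibniz rule $\mathcal{D}[X,Y] = [\mathcal{D}X,Y] + [X,\mathcal{D}Y]$, the left side equals $(\mathbf{0},\, d\,\omega(\zeta_1,\zeta_2))$ while the right side equals $(\mathbf{0},\, \omega(A\zeta_1,\zeta_2) + \omega(\zeta_1,A\zeta_2))$. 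By the infinitesimal identity the right side is $(\mathbf{0},\, (\tr A)\,\omega(\zeta_1,\zeta_2))$, and since $\omega$ is nondegenerate these coincide for all $\zeta_1,\zeta_2$ precisely when $d = \tr A$. The converse—that every matrix of the stated form satisfies the Leibniz rule—is the same computation read backwards, which also shows that $A \in \mathfrak{gl}(2)$ and $\eta \in \R^2$ remain free parameters.

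For the automorphism, I would use that $\HB$ is simply connected and nilpotent, so $\operatorname{Aut}(\HB) \cong \operatorname{Aut}(\fh)$ via $\Phi \mapsto d\Phi_e$; in the exponential coordinates used here the exponential map is the identity on the underlying set $\R^3$, so a group automorphism and its differential agree as linear maps and it suffices to classify Lie algebra automorphisms. Such a $\mathcal{P}$ again preserves $\fz$, giving $b = \mathbf{0}$, while invertibility forces $P \in \mathrm{GL}(2)$ and $d \neq 0$. Imposing $\mathcal{P}[X,Y] = [\mathcal{P}X,\mathcal{P}Y]$ and comparing central components yields $d\,\omega(\zeta_1,\zeta_2) = \omega(P\zeta_1,P\zeta_2) = (\det P)\,\omega(\zeta_1,\zeta_2)$ by the multiplicative identity, so nondegeneracy of $\omega$ gives $d = \det P$, with $\xi \in \R^2$ unconstrained. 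The only genuinely nonroutine point I anticipate is justifying the passage from group automorphisms to Lie algebra automorphisms cleanly—that in these coordinates an abstract smooth automorphism is automatically the linear map induced by its differential—but this is exactly the standard $\exp$-correspondence for simply connected nilpotent groups, after which the whole proposition reduces to the two determinant identities above.
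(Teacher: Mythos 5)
Your argument is correct and complete in both directions, and there is nothing in the paper to compare it against: Proposition \ref{deraut} is stated there without proof, as a standard structural fact about $\fh$ and $\HB$. Your route --- blocking the matrix along the characteristic subspace $\fz=[\fh,\fh]$ to force the upper-right block to vanish, then extracting $d=\tr A$ (resp.\ $d=\det P$) from the identity $\omega(A\zeta_1,\zeta_2)+\omega(\zeta_1,A\zeta_2)=(\tr A)\,\omega(\zeta_1,\zeta_2)$ (resp.\ $\omega(P\zeta_1,P\zeta_2)=(\det P)\,\omega(\zeta_1,\zeta_2)$), with $\eta$, $\xi$ left free --- is exactly the standard proof. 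The one step you flagged as nonroutine is also handled correctly: since $\HB$ is simply connected and two-step nilpotent, the Baker--Campbell--Hausdorff formula $\exp X\exp Y=\exp\left(X+Y+\tfrac{1}{2}[X,Y]\right)$ reproduces precisely the stated product on $\R^2\times\R$, so $\exp$ is the identity in these coordinates and every (smooth, equivalently continuous) group automorphism coincides with its differential at $e$, reducing $\operatorname{Aut}(\HB)$ to $\operatorname{Aut}(\fh)$ as you claim.
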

Now, the elements of the system of interest can be introduced in a structured manner. For a left-invariant vector field $Z$, it holds that if
$$Z=(\zeta, \alpha)\in\fh\hspace{.5cm}\mbox{ and }\hspace{.5cm} \hspace{.5cm} g=(\mathbf{v}, z)\in\HB\hspace{.5cm}\mbox{ then }\hspace{.5cm}Z(g)=\left(\zeta, \alpha+\frac{1}{2}\omega(\mathbf{v}, \zeta)\right).$$
By the Remark \ref{linvec} the action of $\DC$ at an element $(\mathbf{v}, z)\in\mathbb{H}$ is given by matrix multiplication and is written, in coordinates, as 
$$\DC(\mathbf{v}, z)=\left(\begin{array}{cc}
A & {\bf 0}\\ \eta^{\top} & \tr A
\end{array}\right)\left(\begin{array}{c}
\mathbf{v}\\ z
\end{array}\right)=(A\mathbf{v}, \omega(v, \theta\eta)),$$
where $\theta$ is the counter-clockwise rotation by $\pi/2$. 
Consequently, a (one-input) LCS on $\mathbb{H}$ is, in coordinates, given as
 \begin{equation}\label{linearonH}
        \left(\Sigma_{\HB}\right):\quad \left\{\begin{array}{l}
     \dot{\mathbf{v}}= A\mathbf{v}+u\zeta\\
     \dot{z}= z\tr A +u\alpha+\omega\left(\mathbf{v}, \theta\eta+u\frac{1}{2}\zeta\right)
\end{array}\right.
    \end{equation} 
where $u\in \Omega:=[u_{*}, u^{*}]$ with $u_{*}<0<u^{*}$. Moreover, we assume that $\DC\not\equiv 0$ and $\alpha^2+|\zeta|^2\neq 0$, to avoid trivial cases. In particular, the first equation gives a linear control system on $\mathbb{R}^2$ that is conjugated to $\Sigma_{\R^2}$ through the canonical projection 
    $$\pi:\HB\rightarrow\R^2, \hspace{.5cm}(\mathbf{v}, z)\mapsto \mathbf{v}.$$
In particular, the LARC of the $\Sigma_{\HB}$ implies the Kallman-rank condition of $\Sigma_{\R^2}$, assuring the existence of a unique control set $\CC_{\R^2}$ satisfying ${\bf 0}\in\inner\CC_{\R^2}$ (see for instance \cite{DSAyAR, DSAyEM}). In particular, any control set of $\Sigma_{\HB}$ has to be contained in the preimage $\pi^{-1}(\CC_{\R^2})=\CC_{\R^2}\times\R$. These facts will help us to prove the existence and uniqueness of control sets for $\Sigma_{\HB}$ in some cases, as we will see below. 
    
The first relationship is provided in the following technical lemma.

\begin{lemma}
\label{preimage}
If $\Sigma_{\HB}$ satisfies the LARC and the fiber $\{{\bf 0}\}\times\R$ is controllable, then $\CC_{\R^2}\times\R$ it the unique control set with nonempty interior of $\Sigma_{\HB}$. 
\end{lemma}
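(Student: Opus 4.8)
The plan is to exploit the conjugation $\pi:\HB\to\R^2$ together with Proposition~\ref{CCC}(2). Since $\Sigma_{\HB}$ satisfies the LARC and $\pi$ conjugates it to $\Sigma_{\R^2}$, which admits the unique control set $\CC_{\R^2}$ with $\mathbf 0\in\inner\CC_{\R^2}$, it suffices to exhibit a control set $\CC_{\HB}$ of $\Sigma_{\HB}$ whose interior contains the whole fiber $\pi^{-1}(\mathbf 0)=\{\mathbf 0\}\times\R$. Once this is established, Proposition~\ref{CCC}(1) forces $\pi(\CC_{\HB})$ into the unique control set $\CC_{\R^2}$, and then Proposition~\ref{CCC}(2) applied with $y_0=\mathbf 0\in\inner\CC_{\R^2}$ yields $\CC_{\HB}=\pi^{-1}(\CC_{\R^2})=\CC_{\R^2}\times\R$. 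Uniqueness then follows from the already-noted containment: every control set of $\Sigma_{\HB}$ lies inside $\CC_{\R^2}\times\R$, so any control set with nonempty interior is contained in the control set $\CC_{\R^2}\times\R$ and, by maximality, equals it.

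The core of the argument is a reachability computation at a fiber point $x_0=(\mathbf 0,z_0)$. First I would record that $\mathbf 0\in\inner\CC_{\R^2}$ forces $\mathbf 0\in\inner\OC^+_{\R^2}(\mathbf 0)\cap\inner\OC^-_{\R^2}(\mathbf 0)$, so each of $\OC^{\pm}_{\R^2}(\mathbf 0)$ contains a neighbourhood $V$ of $\mathbf 0$. Next I claim $\{\mathbf v\}\times\R\subset\OC^+(x_0)$ for every $\mathbf v\in\OC^+_{\R^2}(\mathbf 0)$. To see this, fix a control $\bar u$ on $[0,T]$ steering $\mathbf 0\to\mathbf v$ in $\Sigma_{\R^2}$; applying the same control to $\Sigma_{\HB}$ from a fiber point $(\mathbf 0,z')$ lands at $(\mathbf v,\Phi(z'))$, where solving the scalar linear equation for $z$ in \eqref{linearonH} gives $\Phi(z')=e^{T\tr A}z'+C(\bar u)$, an affine bijection of $\R$ (slope $e^{T\tr A}\neq 0$ when $\tr A\neq 0$, and slope $1$ when $\tr A=0$). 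Since the fiber is controllable, $x_0$ reaches every $(\mathbf 0,z')$, and letting $z'$ range over $\R$ the endpoint $\Phi(z')$ sweeps all of $\R$. Hence $\OC^+(x_0)\supset V\times\R$, an open set containing $x_0$, so $x_0\in\inner\OC^+(x_0)$; the time-reversed version of the same computation, using $V\subset\OC^-_{\R^2}(\mathbf 0)$ and backward reachability along the fiber, gives $x_0\in\inner\OC^-(x_0)$.

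By the characterization of interior points of control sets under the LARC recalled above, the relation $x_0\in\inner\OC^+(x_0)\cap\inner\OC^-(x_0)$ places $x_0$ in the interior of a control set $\CC_{\HB}=\cl\OC^+(x_0)\cap\OC^-(x_0)$. Running the identical computation from an arbitrary fiber point $(\mathbf 0,z)$ shows every point of $\{\mathbf 0\}\times\R$ lies in the interior of its own positive and negative orbits; since the fiber is controllable these mutually reachable interior points all belong to the same control set, so $\{\mathbf 0\}\times\R\subset\inner\CC_{\HB}$. This is exactly the hypothesis needed to close the argument via Proposition~\ref{CCC}(2) as in the first paragraph.

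The main obstacle I anticipate is the full-fiber reachability $\{\mathbf v\}\times\R\subset\OC^+(x_0)$, i.e. showing the entire $z$-line over each nearby $\mathbf v$ is attainable. This is where the two hypotheses genuinely combine: controllability of $\{\mathbf 0\}\times\R$ supplies an arbitrary starting height $z'$, while the affine dependence $\Phi(z')=e^{T\tr A}z'+C$ of the terminal height, surjective onto $\R$ in both the $\tr A=0$ and $\tr A\neq 0$ regimes, converts that freedom into full control of the fiber coordinate. The only point requiring care is that the time $T$ and the constant $C(\bar u)$ may be held fixed while $z'$ varies, which is immediate since $z'$ does not enter the $\mathbf v$-equation of \eqref{linearonH}.
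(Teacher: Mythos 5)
Your proof is correct and takes essentially the same route as the paper: both arguments combine controllability of the fiber $\{\mathbf 0\}\times\R$ with exact controllability in $\inner\CC_{\R^2}$ to produce a control set containing the fiber in its interior, and then conclude $\CC_{\HB}=\pi^{-1}(\CC_{\R^2})=\CC_{\R^2}\times\R$ via Proposition \ref{CCC}(2) applied at $y_0=\mathbf 0$. Your affine endpoint map $\Phi(z')=e^{T\tr A}z'+C(\bar u)$ simply makes explicit the concatenation mechanism that the paper compresses into the one-line claim that $\inner\CC_{\R^2}\times\R$ is controllable.
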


\begin{proof}
Let us start by noticing that, the controllability in $\inner\CC_{\R^2}$ and on $\{{\bf 0}\}\times\R$, imply that $\inner\CC_{\R^2}\times\R$ is controllable and, in particular, 
$$\forall (\mathbf{v}, z)\in \CC, \hspace{.5cm}\mbox{ it holds that }\hspace{.5cm}\CC\subset\overline{\OC^+(\mathbf{v}, z)}.$$
Therefore, there exists a control set $\CC$ of $\Sigma_{\HB}$ satisfying $\CC_{\R^2}\times\R\subset\CC$. However, by Proposition \ref{CCC} and the fact that the canonical projection 
$$\pi:\HB\rightarrow\R^2, \hspace{.5cm}(\mathbf{v}, z)\mapsto \mathbf{v},$$
    conjugates the linear control systems $\Sigma_{\HB}$ and $\Sigma_{\R^2}$, we get that
    $$\pi^{-1}({\bf 0})=\{\bf 0\}\times\R\subset\inner\CC\hspace{.5cm}\implies\hspace{.5cm} \CC=\pi^{-1}(\CC_{\R^2})=\CC_{\R^2}\times\R,$$
concluding the proof.
\end{proof}


\begin{remark}
\label{remark}
Another important conjugation we can make of your initial LCS comes from the use of automorphisms. In fact, since $\HB$ is connected and simply connected, \cite[Proposition 7]{PJMD} assures that, for any automorphism $\PC$,  a linear control system with associated derivation $\DC$ and a left-invariant vector field $Z$ is equivalent to the linear control system whose derivation is given by $\PC\DC\PC^{-1}$ and the left-invariant vector field is $\PC Z$. 

Hence, by the Proposition \ref{deraut}, taking $\PC=\left(\begin{array}{cc}
P & \mathbf{0} \\
\xi^{\top} &\operatorname{det} P
\end{array}\right)\in \operatorname{Aut}(\mathbb{H})$ we have
$$\PC\DC\PC^{-1}=\left(\begin{array}{cc}
	PAP^{-1} & \mathbf{0}\\ \widehat{\eta}^{\top} & \tr A
	\end{array}\right),\hspace{.5cm}\widehat{\eta}=(P^{-1})^{\top}\left((A-\tr A\cdot I_2)^{\top}\xi+\det P\eta\right)$$
and 
$$\PC Z=(P\zeta, \omega(\zeta, \theta\eta)+\alpha\det P),$$
 where in the previous $I_2$ stands or the identity map of $\R^2$.  As can be seen that the top-left block of  $\PC\DC\PC^{-1}$  is  $PAP^{-1}$ and, since $P \in \mathrm{GL}(2)$ is arbitrary, we can choose $P$ such that $PAP^{-1}$ is in Jordan canonical form. This simplifies  $A$ while preserving its spectral properties (i.e., $\det A$, $\tr A$ and its eigenvalues).  Moreover, the vector $\eta\in\R^2$ adjusts via
$\widehat{\eta}=(P^{-1})^{\top}\left((A-\tr A\cdot I_2)^{\top}\xi+\det P\eta\right) 
$. Hence, if $\det A\neq 0$ we can conjugated our initial system to a system where the associated derivation has $\widehat{\eta}=0$. On the other hand, one can choose an automorphism $\PC$ that changes our the first component $\zeta$ of $Z$ or even has $\alpha=0$. 

Therefore, choosing appropriated automorphisms allows us to make strategic choices that greatly simplifies the calculations involved as we will see ahead.
\end{remark}

We finish this section by providing a simpler algebraic way to assure the LARC and the ad-rank condition.

\begin{lemma}
Assume that the $\Sigma$ be an LCS defined in the system (\ref{linearonH}). Then the followings are satisfied:
\begin{itemize}
    \item[(1)] $\Sigma_{\HB}$ satisfies ad-rank condition if and only if $\omega(A\zeta, \zeta)\left(\alpha\det A+\omega(A\zeta, \theta\eta)\right)\neq 0.$
    \item[(2)] $ \Sigma_{\HB}$  satisfies the LARC  if and only if $\omega(A\zeta, \zeta)\neq 0.$
\end{itemize}
\end{lemma}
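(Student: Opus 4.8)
The plan is to translate both conditions into linear algebra on the ordered triple $Z,\DC Z,\DC^2 Z\in\fh\cong\R^3$, since $\dim\fh=3$ and there is a single control vector. Writing $Z=(\zeta,\alpha)$ as a column and using the matrix form of $\DC$ from Proposition \ref{deraut}, I would first record
$$\DC Z=\begin{pmatrix} A\zeta \\ \eta^{\top}\zeta+\alpha\tr A\end{pmatrix},\qquad \DC^2 Z=\begin{pmatrix} A^2\zeta \\ \eta^{\top}A\zeta+\tr A\,(\eta^{\top}\zeta+\alpha\tr A)\end{pmatrix}.$$
The ad-rank condition is exactly the statement that the $3\times 3$ matrix $M=[\,Z\mid\DC Z\mid\DC^2 Z\,]$ is invertible, so for part (1) everything reduces to computing $\det M$ and showing it equals a nonzero multiple of $\omega(A\zeta,\zeta)\bigl(\alpha\det A+\omega(A\zeta,\theta\eta)\bigr)$.

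For that determinant I would expand along the bottom row, producing the three $2\times2$ minors $\omega(A\zeta,A^2\zeta)$, $\omega(\zeta,A^2\zeta)$ and $\omega(\zeta,A\zeta)$ weighted (with alternating signs) by the bottom-row entries. The key simplification is Cayley--Hamilton, $A^2=\tr A\cdot A-\det A\cdot I_2$, together with $\omega(A\zeta,A\zeta)=\omega(\zeta,\zeta)=0$; these collapse the first two minors to $-\det A\,\omega(A\zeta,\zeta)$ and $-\tr A\,\omega(A\zeta,\zeta)$ respectively. After substituting $c_3=\eta^{\top}A\zeta+\tr A\,c_2$ for the last bottom-row entry, the $\tr A$ contributions cancel and one is left with $\det M=-\,\omega(A\zeta,\zeta)\bigl(\alpha\det A+\eta^{\top}A\zeta\bigr)$. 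Finally I would invoke the identity $\omega(\mathbf{u},\theta\eta)=\langle \mathbf{u},\eta\rangle=\eta^{\top}\mathbf{u}$ (immediate from $\theta$ being rotation by $\pi/2$) to rewrite $\eta^{\top}A\zeta=\omega(A\zeta,\theta\eta)$, which yields precisely the claimed factorization and hence the equivalence in (1).

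Part (2) I would prove more structurally, via the canonical projection $\pi:\fh\to\R^2$, $(\zeta,\alpha)\mapsto\zeta$. Since every bracket lands in the center $\fz=\{\mathbf 0\}\times\R=\ker\pi$, $\pi$ is a Lie-algebra homomorphism, and because the top-left block of $\DC$ is $A$ one has $\pi\circ\DC=A\circ\pi$. Consequently, if $\fm$ denotes the smallest $\DC$-invariant subalgebra containing $Z$, then $\pi(\fm)$ is the smallest $A$-invariant subspace of $\R^2$ containing $\zeta$, namely $\operatorname{span}\{\zeta,A\zeta\}$ (brackets do not enlarge $\pi(\fm)$ since they are killed by $\pi$). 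If $\omega(A\zeta,\zeta)\neq0$ then $\zeta,A\zeta$ are independent, so $\pi(\fm)=\R^2$; moreover $[Z,\DC Z]=(\mathbf 0,\omega(\zeta,A\zeta))$ is a nonzero central element, so $\fz\subset\fm$, and the dimension count $\dim\fm=\dim\pi(\fm)+\dim(\fm\cap\fz)=2+1=3$ gives $\fm=\fh$, i.e.\ the LARC. Conversely, if $\omega(A\zeta,\zeta)=0$ then $\dim\pi(\fm)\le1$ and $\dim(\fm\cap\fz)\le1$, whence $\dim\fm\le2<3$ and the LARC fails.

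I expect the only real obstacle to be the bookkeeping in the determinant of $M$: one must carefully propagate the third-coordinate entries through two applications of $\DC$ and apply Cayley--Hamilton to exactly the right minors so that all $\tr A$ contributions cancel. Everything else --- the homomorphism property of $\pi$, the Krylov description of $\pi(\fm)$, and the rank--nullity count --- is routine once the central role of $\fz$ is isolated.
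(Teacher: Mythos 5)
Your proposal is correct. For part (1) you take essentially the paper's route: the paper also reduces via Cayley--Hamilton, writing $\DC^2 Z=-\det A\cdot Z+\tr A\cdot\DC Z+(\mathbf{0},\alpha\det A+\omega(A\zeta,\theta\eta))$, and reads off the independence of $\{Z,\DC Z,\DC^2 Z\}$ from this decomposition rather than expanding $\det M$ by cofactors; your determinant computation is the same bookkeeping in different packaging (and your dictionary $\eta^{\top}A\zeta=\omega(A\zeta,\theta\eta)$ is exactly the identity the paper uses throughout), so both hinge on the same two facts, namely $\omega(A\zeta,\zeta)\neq 0$ detecting independence of the planar parts and the central correction term $\alpha\det A+\omega(A\zeta,\theta\eta)$ supplying the third dimension. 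For part (2), however, your route is genuinely different and, as it happens, sturdier than the paper's. The paper argues by direct computation: from $\omega(A\zeta,\zeta)=0$ it asserts that ``$\det A=0$ and $A\zeta=\tr A\cdot\zeta$'' and then verifies that $\operatorname{span}\{Z,\DC Z\}$ is $\DC$-invariant and bracket-closed; but that intermediate assertion is false in general (take $A=I_2$: then $\omega(A\zeta,\zeta)=0$ while $\det A=1$ and $A\zeta\neq \tr A\cdot\zeta$) --- what actually follows is only $A\zeta=\lambda\zeta$ for some $\lambda$, which still suffices for the paper's span argument, so the lemma is unharmed. Your structural argument ($\pi\circ\DC=A\circ\pi$, all brackets landing in the center $\fz=\ker\pi$, rank--nullity applied to $\pi|_{\fm}$) sidesteps this eigenvalue bookkeeping entirely and handles both directions of (2) uniformly, including the degenerate case $\zeta=\mathbf{0}$. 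The one step of yours worth writing out in full is the parenthetical claim that brackets do not enlarge $\pi(\fm)$: the clean formulation is that for $W=\operatorname{span}\{\zeta,A\zeta\}$ the preimage $\pi^{-1}(W)$ is a $\DC$-invariant subalgebra containing $Z$, hence contains $\fm$, so that $\pi(\fm)\subset W$; with that spelled out, your proof is complete.
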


\begin{proof} (1) Let us start by noticing that if $Z=(\zeta, \alpha)$, then 
$$\DC Z=(A\zeta, \alpha\tr A+\omega(\zeta, \theta\eta))\hspace{.5cm}\mbox{ and }\hspace{.5cm} [Z, \DC Z]=(\mathbf{0}, \omega(A\zeta, \zeta)).$$
On the other hand, by Cayley-Hamilton, 
$$A^2\zeta=-\det A\zeta+\tr A\cdot A\zeta,$$
and hence
$$\DC^2 Z=(A^2\zeta, \omega(A\zeta, \theta\eta)+\tr A\cdot\omega(\zeta, \theta\eta)+\alpha(\tr A)^2)=-\det A \cdot Z+\tr A\cdot\DC Z+(\mathbf{0}, \alpha\det A+\omega(A\zeta, \theta\eta)).$$
Since $\omega(A\zeta, \zeta)\neq 0$ if and only if $\{A\zeta, \zeta\}$ is linearly independent, we conclude that $\Sigma_{\HB}$ satisfies the ad-rank condition if and only if  $\{Z, \DC Z, \DC^2 Z\}$ is linearly independent if and only if
$$\omega(A\zeta, \zeta)(\alpha\det A+\omega(A\zeta, \theta\eta))\neq 0.$$

(2) If $\omega(A\zeta, \zeta)=0$ then $\det A=0$ and  $A\zeta=\tr A\cdot\zeta$ implying, by the previous item, that
$$\DC^2 Z=\tr A\cdot\DC Z+\tr A\cdot (\mathbf{0}, \omega(\zeta, \theta\eta)).$$
On the other hand,
$$\DC Z=(\tr A\cdot\zeta, \alpha\tr A+\omega(\zeta, \theta\eta))=\tr A\cdot(\zeta, \alpha)+(\mathbf{0}, \omega(\zeta, \theta\eta))=\tr A\cdot Z+(\mathbf{0}, \omega(\zeta, \theta\eta)),$$
implying that 
$$\DC^2 Z=2\tr A\cdot\DC Z-(\tr A)^2\cdot Z.$$
Since, $[Z, \DC Z]=(\mathbf{0}, \omega(A\zeta, \zeta))=(\mathbf{0}, 0)$ we conclude that
$$\omega(A\zeta, \zeta)=(\mathbf{0}, 0)\hspace{.5cm}\implies\hspace{.5cm}\Sigma_{\HB} \mbox{ cannot satisfy the LARC}.$$

Reciprocally, it holds that  
$$a_1Z+a_2+\DC Z+a_2[Z, \DC Z]=(\mathbf{0},0)\hspace{.5cm}\iff\hspace{.5cm} \left\{\begin{array}{c}
	a_1\zeta+a_2 A\zeta=\mathbf{0}\\
	a_1\alpha+a_2(\alpha\tr A+\omega(\zeta, \theta\eta))+a_3\omega (A\zeta, \zeta)=0
	\end{array}\right.$$
In particular, $\omega(A\zeta, \zeta)\neq 0$ implies that $a_1=a_2=0$ on the first equation, and hence, 
$$a_3\omega(A\zeta, \zeta)=0\hspace{.5cm}\implies\hspace{.5cm} a_3=0,$$
showing that
$$\omega(A\zeta, \zeta)\neq 0\hspace{.5cm}\implies\hspace{.5cm}\mathrm{span}\{Z, \DC Z, [Z, \DC Z]\}=\fh,$$
concluding $\Sigma_{\HB}$ satisfies the LARC.
\end{proof}




\section{The control sets of $\HB$}
In this section, control sets with nonempty interior for LCSs on the Heisenberg group are analyzed. The discussion is organized according to the different cases arising from whether the determinant and trace of the matrix 
$A$ vanish or not. Since the LARC provides the minimal requirement for the existence of control sets with nonempty interior, it will be assumed throughout the remainder of the analysis.

\subsection{The case $\det A=\tr A=0$}
From Proposition \ref{deraut} and Remark \ref{remark}, the automorphism
$$\mathcal{P}=\left(\begin{array}{cc}
   \frac{1}{2}I_2  & {\bf 0} \\
    -\frac{\alpha}{4|\zeta|^2} & \frac{1}{4}
\end{array}\right),
$$
is an automorphism of $\HB$ that conjugates the linear control system $\Sigma_{\HB}$ to the linear control system of the form  
$$\left\{\begin{array}{l}
     \dot{\mathbf{v}}= A\mathbf{v}+u\zeta\\
     \dot{z}=\frac{1}{2}\omega\left(\mathbf{v}, \theta\eta+u\zeta\right)
     \end{array}\right.
     $$

Hence, 
$\Sigma_{\HB} $ verifies the ad-rank condition if and only if $ \omega(A\zeta, \theta\eta)\neq 0.$ This immediately leads to the following theorem:
\begin{theorem}
\label{bothzeros}
	It holds:
	\begin{itemize}
		\item[(1)] If $\omega(A\zeta, \theta\eta)\neq 0$ then $\HB$ is the only control set of $\Sigma_{\HB}$,
		\item[(2)]If $\omega(A\zeta, \theta\eta)=0$ then the plane $\ker\DC$ is a continuum of one-point control sets of $\Sigma_{\HB}$. 
	\end{itemize} 
\end{theorem}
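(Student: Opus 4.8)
The plan is to treat both cases through one common observation: since $\det A=\tr A=0$ together with the LARC ($\omega(A\zeta,\zeta)\neq0$) force $A$ to be a nonzero nilpotent matrix with $A^2=0$, the projected system $\dot{\mathbf v}=A\mathbf v+u\zeta$ on $\R^2$ has its only eigenvalue on the imaginary axis and satisfies the Kalman rank condition; hence it is controllable and $\CC_{\R^2}=\R^2$. I would also record at the outset that, writing $\DC(\mathbf v,z)=(A\mathbf v,\omega(\mathbf v,\theta\eta))$, one has $\ker\DC=\{\,\mathbf v\in\ker A,\ \omega(\mathbf v,\theta\eta)=0\,\}$: in case (1) the hypothesis $\omega(A\zeta,\theta\eta)\neq0$ collapses this to the center $\{\mathbf 0\}\times\R$, whereas in case (2) it is the full plane $\ker A\times\R$. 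Since in these exponential coordinates $\exp=\id$, the points of $\ker\DC$ are exactly the rest points of the drift.

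For (1), the ad-rank condition (equivalent here to $\omega(A\zeta,\theta\eta)\neq0$) gives local controllability at $e$, so $e\in\inner\CC$ for some control set $\CC$. The key to globalizing is that, after the normalization of this subsection, the $z$-equation $\dot z=\tfrac12\omega(\mathbf v,\theta\eta+u\zeta)$ does not depend on $z$ (because $\tr A=0$); hence the $z$-increment produced by any control returning $\mathbf v$ to its initial value is independent of the starting height. I would use local controllability to pick a control steering $e$ to $(\mathbf 0,\delta)$, concatenate it with itself to reach $(\mathbf 0,n\delta)$ for every $n\ge1$, use the time-reversed system (whose invariant $\omega(-A\zeta,\theta\eta)$ is again nonzero, so it too satisfies the ad-rank condition) to reach $(\mathbf 0,-n\delta)$, and fill the gaps with the fiber interval reachable by local controllability. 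This shows the fiber $\{\mathbf 0\}\times\R$ is controllable, and Lemma \ref{preimage} then yields $\CC=\CC_{\R^2}\times\R=\HB$ together with uniqueness.

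For (2), every $p\in\ker\DC$ already satisfies property (1) of a control set via the constant control $u=0$, and property (2) is automatic, so the entire content is maximality. I would construct a first integral adapted to the normalized coordinates ($A$ in Jordan form and $\zeta,\eta$ simplified through Remark \ref{remark}), namely $V(\mathbf v,z)=z-g(\mathbf v)$ with $g$ quadratic-plus-linear chosen so that the $u$-dependent terms cancel and $\dot V=-\tfrac12 v_2^{\,2}\le0$ along every trajectory, with equality exactly on $\ker A$. Monotonicity of $V$ confines any control set containing $p$ to the level set $\{V=V(p)\}$, and since leaving $\ker A$ strictly decreases $V$, no trajectory can remain on that level set, so property (1) fails off $\ker\DC$ and the control set is trapped inside $\ker\DC\cap\{V=V(p)\}$. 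The \emph{main obstacle}, and the step requiring the most care, is the final separation: one must show that distinct rest points on the same level set of $V$ are not approximately mutually reachable, so that the control set is a single point rather than a larger invariant slice of $\ker\DC$. This is genuinely delicate because the $V$-cost of translating along $\ker\DC$ can be made arbitrarily small by slow excursions, so the argument cannot rest on $V$ alone and must exploit a finer invariant or a quantitative estimate on the $z$-drift of such excursions; this is where the proof must concentrate its effort. Letting $p$ range over $\ker\DC$ then exhibits the claimed continuum.
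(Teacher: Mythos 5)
Your part (1) is correct and takes a genuinely different route from the paper. The paper passes to the adapted basis in which the system reads $\dot x=y$, $\dot y=u$, $\dot z=ax+\frac13(2xu-y^2)$, quotes controllability of the planar double integrator, and proves controllability of the fiber $\{\mathbf 0\}\times\R$ by an explicit five-arc concatenation exploiting that the zero control at $(\pm x,0)$ with $ax>0$ gives $\dot z=\pm ax$; you instead get the fiber from the ad-rank condition (local controllability at $e$ puts a full neighborhood of $e$, in particular the points $(\mathbf 0,\pm\delta)$, inside $\OC^+(e)$) together with invariance of the whole system under central translations, which is valid here precisely because $\tr A=0$, and then concatenate. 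Both routes finish the same way, via Lemma \ref{preimage} and controllability of the projected system (here $A$ is nilpotent, so all eigenvalues vanish and the Kalman rank condition is the LARC). Your detour through the time-reversed system is unnecessary, since local controllability already yields points below $e$, but it is harmless.

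Part (2) is where your proposal has a genuine gap, and you have located it exactly. Your monotone function exists as claimed: in the paper's normalized coordinates $V=z-\frac23xy$ satisfies $\dot V=-y^2\le 0$, which confines candidate control sets to level lines of $V$ inside $\ker\DC$; but you never produce the ``finer invariant'' needed to separate two distinct rest points on one such line, so your proof of (2) is incomplete. Be aware that you cannot simply import the paper's device to close this: the paper uses the cubic function $F(x,y,z)=3z\sigma+y(y^2-2x\sigma)$ with $\sigma<\min\Omega$, shows $F$ strictly increases along every non-stationary trajectory, and then asserts $\overline{\OC^{\pm}(p)}\setminus\{p\}\subset F^{\pm}_{F(p)}$ with the \emph{open} half-spaces. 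Strict monotonicity along trajectories only gives the non-strict inequality $F\ge F(p)$ on the orbit \emph{closure}, and your worry about slow excursions is substantive: since along a constant-control arc $\dot z\equiv\frac13(2x_0u-y_0^2)$, the three-arc maneuver ``accelerate to $y=\epsilon$, drift with $u=0$ for time $\approx d/\epsilon$, decelerate'' carries $(x_0,0,z_0)$ exactly to $(x_0+d,0,z_0-\epsilon d+O(\epsilon^3))$, so letting $\epsilon\to 0$ shows that the distinct equilibrium $(x_0+d,0,z_0)$, which lies on the \emph{same} $F$-level and the same $V$-level, belongs to $\overline{\OC^{+}(x_0,0,z_0)}$; the symmetric maneuver with $y=-\epsilon$ gives the reverse approximate reachability. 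Hence no function that is merely monotone along trajectories can deliver the final separation, and the step you flagged as the ``main obstacle'' is not a technicality but the actual crux: as written, neither your sketch nor the paper's strict-inclusion argument resolves it, and your proposal therefore does not prove statement (2).
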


\begin{proof} Since, by the LARC,  $\omega(A\zeta, \zeta)\neq 0$, the set $\{(A\zeta, 0), (\zeta, 0), (0, \frac{1}{2}\omega(A\zeta, \zeta))\}$ forms a basis of $\fh$. In such a basis, the linear control system $\Sigma_{\HB}$ is written as
$$
\hspace{-2cm}(\Sigma_{\HB})\hspace{2cm}\left \{
\begin{array}
[c]{l}%
\dot{x}=y\\
\dot{y}= u\\
\dot{z}=ax+by+xu
\end{array}
\right.,\hspace{.5cm} \mbox{ where }\hspace{.5cm}a:=\frac{\omega(A\zeta, \theta\eta)}{\omega(A\zeta, \zeta)}\hspace{.5cm}\mbox{ and }\hspace{.5cm}b:=\frac{\omega(\zeta, \theta\eta)}{\omega(A\zeta, \zeta)}.
$$

The map, 
$$f:\HB\rightarrow\R^3, \hspace{1cm}f(x, y, z)=\left(x, y, z-b x-\frac{1}{3}xy\right),$$
conjugates the system $\Sigma_{\HB}$ to the system\footnote{The change from $\HB$ to $\R^3$ in the subscripts is to emphasize that $\Sigma_{\R^3}$ is not a linear control system.}
$$\hspace{-2cm}(\Sigma_{\R^3})\hspace{2cm}\left \{
\begin{array}
[c]{l}%
\dot{x}=y\\
\dot{y}= u\\
\dot{z}=ax+\frac{1}{3}(2x u-y^2)
\end{array}
\right.,$$
whose solutions for constant control functions are
$$\phi(t, (\mathbf{v}_0, z_0), u)=\left(x_0+y_0t+\frac{ut^2}{2}, y_0+tu, z_0+a\left(x_0t+y_0\frac{t^2}{2}+\frac{ut^3}{6}\right)+\frac{1}{3}(2x_0u-y_0^2)t\right).$$

(1) In this case, $a\neq 0$ and the linear control system $\Sigma_{\R^2}$, induced by the first two components of $\Sigma_{\R^3}$ is controllable (see \cite[Theorem 3.1]{DSAyAR}). In particular, for any $\mathbf{v}_1, \mathbf{v}_2\in\R^2$ there exists $\tau>0$ and $ u\in\UC$ such that 
$$\phi(t, \{\mathbf{v}_1\}\times\R,  u) = \{{\bf v_2}\}\times\R.$$

Let us use the previous to show that $\{\mathbf{0}\}\times\R$ is controllable. Let then $(\mathbf{0}, z_1), (\mathbf{0}, z_2)$ and assume $z_1<z_2$ since the other case is analogous. Let $\mathbf{v}=(x, 0)\in\R^2$ with $ax>0$ and consider controls $u_0, u_1, u_2\in\UC$, times $t_0, t_1, t_2>0$ real numbers $z_0, \bar{z_0}, \bar{z_1}, \bar{z_2}\in\R$ such that $\bar{z}_1<z_0$, $\bar{z_2}<\bar{z_0}$,
$$\phi(t_0, (\mathbf{v}, z_0), u_0)=(-\mathbf{v}, \bar{z}_0), \hspace{.5cm}\phi(t_1, (\mathbf{0}, z_1)), u_1)=(\mathbf{v}, \bar{z}_1)\hspace{.5cm}\mbox{ and }\hspace{.5cm}\phi(t_2, (-\mathbf{v}, \bar{z}_2), u_2)=(\mathbf{0}, z_2).$$
A trajectory connecting $(\mathbf{0}, z_1)$ to $(\mathbf{0}, z_2)$ is constructed as follows:

\begin{itemize}
    \item[(a)] With control $u_1$ and time $t_1>0$, connect $(\mathbf{0}, z_1)$ to the point $(\mathbf{v}, \bar{z}_1)$;

\item[(b)] With control $u\equiv 0$ and time $t=\frac{z_0-\bar{z_1}}{ax}>0$ connect $(\mathbf{v}, \bar{z}_1)$ to the point
$$\phi(t, (\mathbf{v}, \bar{z}_1), 0)=(\mathbf{v}, \bar{z}_1+axt)=(\mathbf{v}, z_0);$$

\item[(c)] With control $u_0$ and time $t_0>0$ go from $(\mathbf{v}, z_0)$ to $(-\mathbf{v}, \bar{z}_0)$;

    \item[(d)] Now, with control $u\equiv 0$ and time $t'=\frac{\bar{z_0}-z_2}{ax}>0$ connect $(-\mathbf{v}, \bar{z}_0)$ to the point
$$\phi(t', (-\mathbf{v}, \bar{z}_0), 0)=(-\mathbf{v}, \bar{z}_0-axt')=(-\mathbf{v}, \bar{z}_2);$$

\item[(e)] With control $u_2$ and time $t_2>0$ go from $(-\mathbf{v}, \bar{z}_2)$ to $(\mathbf{0}, z_2)$, concluding the proof.
\end{itemize}

(2) By our basis choice, the plane $\ker\DC$ coincides with the plane $y=0$. In particular, the fact that the conjugation $f$ fixes this plane implies that we can work with the system $\Sigma_{\R^3}$. Then, the hypothesis $\omega(A\zeta, \theta\eta)=0$ implies $a=0$ and the solutions of $\Sigma_{\R^3}$ are reduced to
$$\phi(t, (\mathbf{v}_0, z_0), u)=\left(x_0+y_0t+\frac{ut^2}{2}, y_0+tu, z_0+\frac{1}{3}(2x_0u-y_0^2)t\right).$$

 Since the points in $\ker\DC$ are equilibria of the system, any of them is contained in a control set. Therefore, we only have to show that these control sets are in fact singletons and they are the only control sets of  $\Sigma_{\R^3}$. In order to do that, let us define the function 
$$F:\R^3\rightarrow\R, \hspace{1cm}F(x, y, z):=3z\sigma+y(y^2-2x\sigma),$$
where $\sigma< u$ for all $u\in\Omega$. It is straightforward to see that any $F^{-1}(c)$ is a smooth deformation of a plane for any $c\in\R$. In particular, it divides $\R^3$ into the disjoint half-spaces
$$F^+_c:=\{(\mathbf{v}, z)\in\R^3; F(\mathbf{v}, z)>c\}\hspace{.5cm}\mbox{ and }\hspace{.5cm}F^-_c:=\{(\mathbf{v}, z)\in\R^3; F(\mathbf{v}, z)<c\}.$$

Now, for any $u\in\Omega$ and $\mathbf{v}_0=(x_0, y_0, z_0)$, it holds that: If $u=0$, then 
$$F(\phi(t, (\mathbf{v}_0, z_0), 0))=F\left(x_0+ty_0, y_0, z_0-\frac{1}{3}y_0^2t\right)=3\left(z_0-\frac{1}{3}y_0^2t\right)\sigma+y_0(y_0^2-2(x_0+ty_0)\sigma)$$
$$=F(\mathbf{v}_0, z_0)-3ty_0^2\sigma\geq F(\mathbf{v}_0, z_0)
,\hspace{.5cm}\mbox{ since }\hspace{.5cm} \sigma<0,$$
where the equality holds if and only if $y_0=0$. On the other hand, if $u\neq 0$, then  $$g_u(t):=F(\phi(t, (\mathbf{v}_0, z_0), u))=3\sigma\left(z_0+\frac{1}{3}(2x_0u-y_0^2)t\right)+(y_0+ut)\left((y_0+ut)^2-2\left(x_0+y_0t+\frac{ut^2}{2}\right)\sigma\right)$$
$$=(y_0+ut)^3+\sigma\left(3z_0-2x_0y_0-3y_0^2t-3y_0ut^2-u^2t^3\right)=(y_0+ut)^3+\sigma\left(3z_0-2x_0y_0+\frac{1}{u}y_0^3-\frac{1}{u}(y_0+ut)^3\right)$$
$$=\frac{u-\sigma}{u}(y_0+ut)^3+\sigma\left(3z_0-2x_0y_0+\frac{1}{u}y_0^3\right)=F(\mathbf{v}_0, z_0)+\frac{u-\sigma}{u}\Bigl[(y_0+ut)^3-y_0^3\Bigr].$$
   Derivation, gives us that 
   $$g_u'(t)=3(u-\sigma)(y_0+ut)^2\geq 0, $$
   with equality if and only if $y_0+ut=0$. Therefore, 
 $$y_0^2+u^2\neq 0\hspace{.5cm}\implies\hspace{.5cm}\forall t>0, \hspace{.5cm}F(\phi(t, \mathbf{v}_0, u))> F(\mathbf{v}_0, z_0).$$
As a consequence, 
$$\overline{\mathcal{O}^{\pm}(\mathbf{v}_0, z_0)}\setminus \{(\mathbf{v}_0, z_0)\}\subset F^{\pm}_{c}, \hspace{.5cm}\mbox{ for }\hspace{.5cm}c=F(\mathbf{v}_0, z_0),$$
and hence, 
$$\overline{\OC^+(\mathbf{v}_1, z_1)}=\overline{\OC^+(\mathbf{v}_2, z_2)}\hspace{.5cm}\iff\hspace{.5cm}(\mathbf{v}_1, z_1)=(\mathbf{v}_2, z_2).$$
By condition (2) in Definition \ref{controlset} we conclude that the control sets of $\Sigma_{\R^3}$ are singletons. On the other hand, condition (1) of the same definition implies that these singletons must be equilibria, forcing them to belong in $\ker\DC$, concluding the proof.

\end{proof}

\subsection{The case $\det A\neq 0$ and $\tr A= 0$}

Since $\Sigma_{\R^2}$ satisfies the LARC it admits a unique control set with nonempty interior $\CC_{\R^2}$. Moreover, since we are assuming that $\det A\neq 0$ and that $\omega( A\zeta, \zeta)\neq0$ we can conjugate the system and assume that $\eta=0$ (see Remark \ref{remark}). In this case, we have that 
$$\mbox{ the ad-rank condition holds for }\Sigma\iff\alpha\neq 0.$$

Moreover, under such assumptions, the $z$-component satisfies
$$\dot{z}(t)=\frac{u}{2}\omega(\mathbf{v}(t), \zeta)+u\alpha=\frac{u}{2}\omega \left(A^{-1}(\dot{\mathbf{v}}(t)-u\zeta), \zeta\right)+\alpha u=\frac{u}{2\det A}\omega \left(\dot{\mathbf{v}}(t), A\zeta\right)+\frac{u^2}{2\det A}\omega \left(A\zeta, \zeta\right)+u\alpha$$
implying that
$$z(t)-\frac{u}{2\det A}\omega \left(\mathbf{v}(t), A\zeta\right)=z_0-\frac{u}{2\det A}\omega \left(\mathbf{v}_0, A\zeta\right)+tp(u), \hspace{.5cm}\mbox{ where }\hspace{.5cm}p(u)=\frac{u^2}{2\det A}\omega \left(A\zeta, \zeta\right)+u\alpha.$$

In particular, we have that 
$$\phi(t, (\mathbf{v}(u), z_0), u)=(\mathbf{v}(u), z_0+tp(u)), \hspace{.5cm}\mbox{ where } \hspace{.5cm}\mathbf{v}(u)=-uA^{-1}\zeta,$$
is an equilibrium point of the LCS on $\mathbb{R}^2$ induced by the planar component of $\Sigma_{\HB}$.

\begin{proposition}
\label{fiber}
For any $z_1<z_2$, it holds:
\begin{itemize}
    \item[(1)] If $\alpha\neq 0$, there exists a periodic trajectory connecting $(\mathbf{0}, z_1)$ to $(\mathbf{0}, z_2)$;

    \item[(2)] If $\alpha=0$ and $(\det A)^{-1}\omega(A\zeta, \zeta)
    >0$, there exist a trajectory connecting $(\mathbf{0}, z_1)$ to $(\mathbf{0}, z_2)$ in positive-time;

    \item[(3)] If $\alpha=0$ and $(\det A)^{-1}\omega(A\zeta, \zeta)<0$, there exist a trajectory connecting $(\mathbf{0}, z_2)$ to $(\mathbf{0}, z_1)$ in positive-time
\end{itemize}

\end{proposition}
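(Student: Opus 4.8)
The plan is to exploit the ``vertical elevator'' provided by the planar equilibria. Recall from the computation preceding the statement that, for each constant control $u$, the point $\mathbf{v}(u)=-uA^{-1}\zeta$ is an equilibrium of the induced planar system and the full trajectory through it is purely vertical, $\phi(t,(\mathbf{v}(u),z_0),u)=(\mathbf{v}(u),z_0+tp(u))$, with $p(u)=\frac{u^2}{2\det A}\omega(A\zeta,\zeta)+u\alpha$. Thus on each fiber $\{\mathbf{v}(u)\}\times\R$ the state moves monotonically in $z$ at the constant speed $p(u)$, and the whole question reduces to (a) determining the sign of $p(u)$ and (b) transporting this vertical motion back to the fiber over $\mathbf{0}$ by means of the planar dynamics.

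First I would carry out the sign analysis of $p(u)$ in the three regimes. Writing $p(u)=u\left(\alpha+\frac{u}{2\det A}\omega(A\zeta,\zeta)\right)$, in case (1) ($\alpha\neq 0$) the linear term dominates for small $|u|$, so choosing $u\neq 0$ small with $\operatorname{sign}(u)=\operatorname{sign}(\alpha)$ yields $p(u)>0$; in cases (2) and (3) ($\alpha=0$) one has $p(u)=\frac{u^2}{2}\cdot\frac{\omega(A\zeta,\zeta)}{\det A}$, which is strictly positive for every $u\neq 0$ under the hypothesis of (2) and strictly negative under that of (3). Hence in cases (1) and (2) arbitrarily small controls produce an upward elevator, while in case (3) they produce a downward one, matching the direction asserted in each item.

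Next I would connect the fiber over $\mathbf{0}$ to such an elevator. Since $\Sigma_{\R^2}$ satisfies the LARC, its unique control set $\CC_{\R^2}$ contains $\mathbf{0}$ in its interior, and $\mathbf{v}(u)\to\mathbf{0}$ as $u\to 0$, so for $|u|$ small enough $\mathbf{v}(u)\in\inner\CC_{\R^2}$; exact controllability in the interior of a control set then furnishes positive-time planar trajectories from $\mathbf{0}$ to $\mathbf{v}(u)$ and back. I would then assemble the trajectory
$$(\mathbf{0},z_1)\longrightarrow(\mathbf{v}(u),z_1+c_1)\longrightarrow(\mathbf{v}(u),z_1+c_1+\tau p(u))\longrightarrow(\mathbf{0},z_1+c_1+c_2+\tau p(u)),$$
where the first and third arrows are the planar transits (contributing fixed increments $c_1,c_2$ to $z$) and the middle arrow is the dwell of duration $\tau\geq 0$ at the equilibrium. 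Because the planar projection of this trajectory is a closed loop based at $\mathbf{0}$, it is periodic in the sense required by item (1); in the purely imaginary case one may even take the loop to be a single constant-control ellipse through $\mathbf{0}$. It then remains to hit the target exactly: solving $c_1+c_2+\tau p(u)=z_2-z_1$ (respectively $z_1-z_2$ in case (3)) for $\tau$, this has a nonnegative solution as soon as the transit cost $c_1+c_2$ lies on the correct side of the gap, namely below $z_2-z_1$ in cases (1),(2) and above $z_1-z_2$ in case (3).

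The main obstacle is precisely this last point: controlling the transit cost $c_1+c_2$. My plan is to show $c_1(u)+c_2(u)\to 0$ as $u\to 0$. This should follow from small-time local controllability of the planar system at $\mathbf{0}$ (guaranteed by the Kalman condition $\omega(A\zeta,\zeta)\neq 0$ together with $0\in\inner\Omega$): as $u\to 0$ the endpoints $\mathbf{0}$ and $\mathbf{v}(u)$ coalesce, the transits can be realized in vanishing time along trajectories confined to a shrinking neighborhood of $\mathbf{0}$, and since $\dot z=\tfrac{u}{2}\omega(\mathbf{v},\zeta)+u\alpha$ is then uniformly small along them, the accumulated $z$-increments tend to $0$. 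Fixing the gap $z_2-z_1>0$ and then choosing $u$ small enough that $|c_1+c_2|<z_2-z_1$ makes the required dwell time $\tau$ nonnegative, which completes the construction in all three cases.
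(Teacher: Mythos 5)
Your ``vertical elevator'' scheme is, in substance, the paper's own proof: dwell at the planar equilibrium $\mathbf{v}(u)=-uA^{-1}\zeta$ to ride the vertical speed $p(u)$, and make the $z$-cost of the two transits between the fiber over $\mathbf{0}$ and the fiber over $\mathbf{v}(u)$ small. The paper implements the small-cost transits by fixing $\epsilon$ with $3\epsilon=z_2-z_1$ and a time $t_0$ so small that $\overline{\OC^+_{t_0}(\mathbf{0},z_1)}\subset B_{\epsilon}(\mathbf{0},z_1)$ and $\overline{\OC^-_{t_0}(\mathbf{0},z_2)}\subset B_{\epsilon}(\mathbf{0},z_2)$, then using that the planar projections of these small-time orbits are neighborhoods of the origin, hence contain $\mathbf{v}(u)$ for small $u$; your ``$c_1(u)+c_2(u)\to 0$'' step is the same idea and is sound, so items (2) and (3) go through. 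One imprecision there: along a transit the control ranges over all of $\Omega$, so for $\alpha\neq 0$ the term $u\alpha$ in $\dot z$ is only uniformly \emph{bounded}, not uniformly small; the accumulated increment vanishes because the transit \emph{time} does. This is harmless but should be stated correctly.

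The genuine defect is your treatment of item (1). The claim ``because the planar projection of this trajectory is a closed loop based at $\mathbf{0}$, it is periodic in the sense required'' is false: your concatenated trajectory runs from $(\mathbf{0},z_1)$ to $(\mathbf{0},z_2)$ with $z_1<z_2$, so it does not return to its initial point and cannot be periodic in $\HB$, no matter what its projection does; a closed planar loop only guarantees the $\mathbf{v}$-component closes up, while the $z$-component has strictly increased. What item (1) requires is a closed trajectory of $\Sigma_{\HB}$ passing through \emph{both} points, i.e.\ you must also produce a positive-time connection from $(\mathbf{0},z_2)$ back down to $(\mathbf{0},z_1)$ and concatenate. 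Your own sign analysis already supplies the missing ingredient: since $p(u)=u\alpha+O(u^2)$, small $u$ with $\operatorname{sign}(u)=-\operatorname{sign}(\alpha)$ gives $p(u)<0$, and running your transit-plus-dwell construction with this downward elevator yields the descending leg; this is exactly how the paper closes the loop (for $\alpha>0$ it notes $p\left((-\delta,0)\right)\subset(-\infty,0)$ and repeats the construction with $u\in(-\delta,0)$). The side remark about a ``single constant-control ellipse'' is a red herring: item (1) is not the pure-imaginary case, and a constant-control planar loop through $\mathbf{0}$ gives no control over the resulting $z$-increment.
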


\begin{proof}
    Up to conjugation, we can assume that $\alpha\geq 0$. Moreover, let us analyze the case where $(\det A)^{-1}\omega(A\zeta, \zeta)>0$ since the other possibility is analogous. Under such assumptions, there exists $\delta>0$ such that 
    $$(0, \delta)\subset\inner\Omega\hspace{.5cm}\mbox{ and }\hspace{.5cm} p(0, \delta)\subset (0, +\infty).$$
    Take $\epsilon>0$ satisfying $3\epsilon=z_2-z_1$ and consider $t_0>0$ such that 
    $$\overline{\OC_{t_0}^+(\mathbf{0}, z_1)}\subset B_{\epsilon}(\mathbf{0}, z_1)\hspace{.5cm}\mbox{ and }\hspace{.5cm}\overline{\OC_{t_0}^-(\mathbf{0}, z_2)}\subset B_{\epsilon}(\mathbf{0}, z_2),$$
    which exists by the continuity of the solutions and the compactness of $\Omega$. Moreover, the LARC implies that $\pi(\OC_{t_0}^+(\mathbf{0}, z_1))$ and $\pi(\OC_{t_0}^-(\mathbf{0}, z_2))$ are open neighborhoods of the origin in $\R^2$, where $\pi: (\mathbf{v}, z)\in\HB\rightarrow \mathbf{v}\in \R^2$ is the canonical projection. By the exact controllability in the interior of $\CC_{\R^2}$,  there exists $u\in(0, \delta)$ and $u_1, u_2\in\UC$ such that 
    $$\phi(t_0, (\mathbf{0}, z_1), u_1)=(\mathbf{v}(u), \bar{z}_1) \hspace{.5cm}\mbox{ and }\hspace{.5cm} \phi(t_0, (\mathbf{v}(u), \bar{z}_2), u_2)= (\mathbf{0}, z_2),$$
and by our choices $\bar{z}_1<\bar{z}_2$. On the other hand, 
$$t_1=\frac{\bar{z}_2-\bar{z}_1}{p(u)}>0\hspace{.5cm}\implies\hspace{.5cm}\phi(S_1, (\mathbf{v}(u), \bar{z}_1), u)=(\mathbf{v}(u), \bar{z}_1+t_1p(u))=(\mathbf{v}(u), \bar{z}_2),$$
   and so, by concatenation, we get 
   $$\phi(t_0, \phi(t_1, \phi(t_0, (\mathbf{0}, z_1), u_1), u), u_2)=(\mathbf{0}, z_2).$$

In particular, if $\alpha>0$, we get that $p(- \delta, 0)\subset(-\infty, 0)$ and the previous process can be done with $u\in (-\delta, 0)$ to get a trajectory from $(\mathbf{0}, z_2)$ to $(\mathbf{0}, z_1)$, concluding the proof.
\end{proof}

The previous proposition shows that the fiber is controllable if $\alpha\neq 0$ and controllable in positive-time or negative-time when $\alpha=0$.

\begin{theorem}
It holds:
    \begin{itemize}

    \item[(1)] If $\Sigma_{\HB}$ satisfies the ad-rank condition, then  $\CC_{\R^2}\times\R$ is the only control set of $\Sigma_{\HB}$.

    \item[(2)] If $\Sigma_{\HB}$ does not satisfies the ad-rank condition, but 
    $A$ has a pair of pure imaginary eigenvalues, then (1) also holds;

       \item[(3)] If $\Sigma_{\HB}$ does not satisfies the ad-rank condition and $A$ has a pair of real eigenvalues, then the line $\ker\DC$ is a continuum of one-points control sets of $\Sigma_{\HB}$.

       \end{itemize}
\end{theorem}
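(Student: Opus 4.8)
Throughout this subsection the normalizations already made give $\eta=0$, $\tr A=0$, $\det A\neq 0$, so that $\ker\DC=\{\mathbf 0\}\times\R$ and the ad-rank condition is equivalent to $\alpha\neq 0$. Since $\Sigma_{\R^2}$ admits the \emph{unique} control set $\CC_{\R^2}$, Proposition \ref{CCC}(1) shows that every control set $D$ of $\Sigma_{\HB}$ satisfies $\pi(D)\subset\CC_{\R^2}$, hence $D\subset\CC_{\R^2}\times\R$; I will use this confinement repeatedly. For part (1) the plan is immediate: the ad-rank condition gives $\alpha\neq 0$, so Proposition \ref{fiber}(1) makes the fiber $\{\mathbf 0\}\times\R$ controllable and Lemma \ref{preimage} yields that $\CC_{\R^2}\times\R$ is a control set (the unique one with nonempty interior). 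Because any control set is confined to $\CC_{\R^2}\times\R$ and the latter is itself a control set, maximality in Definition \ref{controlset} forces every control set to coincide with it.

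Part (2) is the delicate one, because when $\alpha=0$ Proposition \ref{fiber} only provides \emph{one-sided} reachability along the fiber (positive- or negative-time, according to the sign of $(\det A)^{-1}\omega(A\zeta,\zeta)$), so Lemma \ref{preimage} does not apply verbatim. The key extra ingredient is that pure imaginary eigenvalues make the drift recurrent: with $u\equiv 0$ the flow is $(\mathbf v,z)\mapsto(e^{\tau A}\mathbf v,z)$, and $e^{\tau A}$ is periodic of period $2\pi/\sqrt{\det A}$, so the drift flow on $\HB$ is periodic and every point is Poisson stable. I would use this periodicity to prove $\cl\,\OC^+(x)=\cl\,\OC^-(x)$ for every $x$, so that the one-sided fiber reachability of Proposition \ref{fiber} upgrades to $(\mathbf 0,z_2)\in\cl\,\OC^+(\mathbf 0,z_1)$ and $(\mathbf 0,z_1)\in\cl\,\OC^+(\mathbf 0,z_2)$ for all $z_1<z_2$; that is, the fiber is approximately controllable. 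Rerunning the argument of Lemma \ref{preimage} (which only needs $\CC_{\R^2}\times\R\subset\cl\,\OC^+(\mathbf v,z)$) then gives that $\CC_{\R^2}\times\R$ is a control set, and the confinement-plus-maximality argument of (1) makes it the unique one. The main obstacle is exactly the passage $\cl\,\OC^-=\cl\,\OC^+$; concretely it amounts to producing, for the rotating planar dynamics, closed loops at the origin whose net $z$-increment $\Delta z=\tfrac12\oint\omega(\mathbf v,d\mathbf v)-\tfrac12\int\omega(\mathbf v,A\mathbf v)\,d\tau$ attains either sign, and I expect to realize the missing (area-dominated) sign by a two-arc loop built from two constant controls with distinct centers $\mathbf v(u_i)=-u_iA^{-1}\zeta$.

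For part (3) the real eigenvalues make the drift hyperbolic, and I would rule out any fat control set by exhibiting a strict Lyapunov-type function, in the spirit of the function $F$ used in Theorem \ref{bothzeros}(2). Working in the eigenbasis $A=\mathrm{diag}(\lambda,-\lambda)$ — where the LARC forces $\zeta=(\zeta_1,\zeta_2)$ to have $\zeta_1,\zeta_2\neq 0$, since otherwise $\zeta$ would be an eigenvector and $\omega(A\zeta,\zeta)=0$ — the system reads $\dot x_1=\lambda x_1+u\zeta_1$, $\dot x_2=-\lambda x_2+u\zeta_2$, $\dot z=\tfrac{u}{2}(x_1\zeta_2-x_2\zeta_1)$. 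I would look for $G=z+f(\mathbf v)$ with $f$ quadratic, chosen so that the coefficient of $u$ in $\dot G$ vanishes identically; this is the single linear PDE $\zeta_1 f_{x_1}+\zeta_2 f_{x_2}=-\tfrac12(x_1\zeta_2-x_2\zeta_1)$, solved by a one-parameter family of quadratics, after which the residual drift part reduces to $\dot G=2\lambda(a x_1^2-c x_2^2)$ for explicit coefficients $a,c$ depending on the free parameter.

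A short computation shows that the free parameter can be chosen so that $a$ and $-c$ share a sign, making $\dot G$ strictly sign-definite off $\{x_1=x_2=0\}=\ker\DC$. Consequently $G$ is strictly monotone along every non-equilibrium trajectory, so — exactly as in Theorem \ref{bothzeros}(2) — one gets $\cl\,\OC^+(x)\cap\cl\,\OC^-(x)=\{x\}$ and every control set is a singleton. Definition \ref{controlset}(1) then forces each such singleton to be an equilibrium, i.e.\ a point of $\ker\DC$, and conversely each point of $\ker\DC$ is an equilibrium and hence a one-point control set, giving the claimed continuum. The contrast between (2) and (3) is thus entirely dictated by recurrence versus hyperbolicity of the drift, which is why the spectrum of $A$ controls the outcome.
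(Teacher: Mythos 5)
Your part (1) is exactly the paper's argument (Proposition \ref{fiber}(1) plus Lemma \ref{preimage}, with the confinement of every control set in $\pi^{-1}(\CC_{\R^2})$), and your part (2) follows the paper's strategy, but with its decisive step left unexecuted. Be careful with the Poisson-stability framing: periodicity of the drift alone does not yield $\cl\,\OC^+(x)=\cl\,\OC^-(x)$, because the controlled vector fields with $u\neq 0$ are \emph{not} recurrent --- over one period $2\pi/\mu$ of the planar rotation the $z$-component gains exactly $\frac{2\pi}{\mu}\,p(u)$ with $p(u)=\frac{u^2}{2\mu}|\zeta|^2>0$ when $\alpha=0$, so every nonzero constant control pushes $z$ monotonically upward along its periodic planar orbit. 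The whole content of case (2) is therefore the loop you only ``expect'' to exist, and the paper builds precisely the two-arc loop you describe: a half-turn ($t=\pi/\mu$) with constant control $\rho>0$ about the center $\mathbf{v}(\rho)$, started at $\mathbf{v}^*=-\rho\frac{\pi}{\mu}\zeta+\mathbf{v}(\rho)$, followed by a zero-control drift arc back to $\mathbf{v}^*$ (which leaves $z$ unchanged, since $\alpha=0$ and $\eta=0$ give $\dot z=0$ for $u=0$); the computed net increment per loop is $-\rho^2\frac{\pi}{4\mu^2}|\zeta|^2<0$, and iterating it $n_0$ times drives $z$ down arbitrarily, which together with Proposition \ref{fiber}(2) gives controllability of the fiber and then Lemma \ref{preimage}. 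So your mechanism (the quadratic drift term $-\frac{1}{2}\int\omega(\mathbf{v},A\mathbf{v})\,d\tau$ dominating for loops far from the origin) is the right one, but as written the sign verification --- the crux of (2) --- is a gap you must close by an explicit computation of the kind above.

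Part (3) is where you genuinely diverge, and your route checks out and is arguably cleaner. The paper conjugates to a normal form, restricts to the positively invariant slab $\R\times\Omega\times\R$, and uses $G(x,y,z)=z+\sigma y+\sigma^2\ln(y-\sigma)$, whose derivative $\mu\frac{(u-\sigma)}{y-\sigma}\,y^2\geq 0$ still depends on $u$ but has a fixed sign on that region. Your ansatz eliminates $u$ altogether: solving $\zeta_1 f_{x_1}+\zeta_2 f_{x_2}=-\frac{1}{2}(x_1\zeta_2-x_2\zeta_1)$ with, say, the $b=0$ member $f=-\frac{\zeta_2}{4\zeta_1}x_1^2+\frac{\zeta_1}{4\zeta_2}x_2^2$ of your one-parameter family gives
\begin{equation*}
\dot G=-\frac{\lambda\,\zeta_1\zeta_2}{2}\left(\frac{x_1^2}{\zeta_1^2}+\frac{x_2^2}{\zeta_2^2}\right),
\end{equation*}
which is strictly sign-definite off $\ker\DC$ for \emph{every} admissible $u$ (indeed any $|b|<\tfrac12$ works, confirming your free-parameter claim, and $\zeta_1\zeta_2\neq 0$ is exactly the LARC since $\omega(A\zeta,\zeta)=2\lambda\zeta_1\zeta_2$). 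This buys a globally defined monotone function, with no invariant-region bookkeeping and no auxiliary conjugation; do record the small point that a trajectory can rest on $\ker\DC$ only with $u\equiv 0$, so any non-constant trajectory has $\mathbf{v}\neq\mathbf{0}$ on a set of positive measure and $G$ is strictly monotone along it. From there your conclusion --- singleton control sets via $\cl\,\OC^+(p)\cap\cl\,\OC^-(p)=\{p\}$, then Definition \ref{controlset}(1) forcing the singletons to be the equilibria $\ker\DC$ --- coincides with the paper's argument in Theorem \ref{bothzeros}(2).
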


\begin{proof}  
(1)  By the comments on the beginning of the section, up to conjugations, the ad-rank condition is equivalent to 
 $\alpha\neq 0$. Therefore, by Proposition \ref{fiber} we have that the fiber $\{\bf 0\}\times\R$ is controllable, which by Lemma \ref{preimage} gives us the result.

 (2) Let us assume that $A$ has a pair of pure imaginary eigenvalues and $\alpha=0$. In this case, we can write in some orthonormal basis, $A=\mu\theta$, where $\mu=\sqrt{|\det A|}$. On the other hand, the fact that 
 $$\frac{1}{\det A}\omega(A\zeta, \zeta)=\frac{1}{\mu}|\zeta|^2>0,$$ 
 implies by Proposition \ref{fiber} that $\{0\}\times\R$ is controllable as soon as we can construct a trajectory connecting $(\mathbf{0}, z_2)$ to $(\mathbf{0}, z_1)$ with  $z_1<z_2$. 

    In this case, the solutions of $\Sigma_{\HB}$, for constant control, can be written as
    $$\phi_1(t, (\mathbf{
    v}_0, z_0), u)=(\cos\mu t)(\mathbf{
    v_0}-\mathbf{v}(u))+(\sin\mu t)\theta(\mathbf{v}_0-\mathbf{v}(u))+\mathbf{v}(u)$$
    $$\phi_2(t, (\mathbf{v}_0, z_0), u)=-\frac{u}{2\mu}\left[(\cos\mu S)\omega(\mathbf{v}_0-\mathbf{v}(u), \zeta)-(\sin \mu t)\omega(\mathbf{v}_0-\mathbf{v}(u), \theta\zeta)\right]+t\frac{u^2}{2\mu}|\zeta|^2+z_0,$$
    where the $z$-component is obtained by the  calculations at the beginning of the section.

    Let us fix $\rho\in\Omega$ with $\rho>0$ and define $\mathbf{v}^*:=-\rho\frac{\pi}{\mu}\zeta+\mathbf{v}(\rho)$. By the controllability of the induced system on $\R^2$, there exist $u_1, u_2, u^*\in\UC$, $t_1, t_2, t^*>0$ and $\bar{z}_1, \bar{z}_2, z^*\in\R$ satisfying
    $$\phi(t_2, (\mathbf{0}, z_2), u_2)=(\mathbf{v}^*, \bar{z}_2), \hspace{.5cm}\phi(t_1, (\mathbf{v}(\rho), \bar{z}_1), u_1)=(\mathbf{0}, z_1)\hspace{.5cm} \mbox{ and }\hspace{.5cm}\phi(t^*, (\mathbf{v}^*, 0), u^*)=(\mathbf{v}(\rho), z^*).$$

    A trajectory connecting $(\mathbf{0}, z_2)$ to $(\mathbf{0}, z_1)$ is construct as follows: 

    \begin{itemize}
        \item[(a)] With control $u_2$ and time $t_2$ connect $(\mathbf{0}, z_2)$ with $(\mathbf{v}^*, \bar{z}_2)$; 
    
        \item[(b)] Using the constant control $u\equiv\rho$ and $\tau_0=\frac{\pi}{\mu}$ we have that 
        $$\phi\left(\frac{\pi}{\mu}, (\mathbf{v}^*, \bar{z}_2), \rho\right)=\left(-(\mathbf{v}^*-\mathbf{v}(\rho))+\mathbf{v}(\rho), -\frac{\rho}{2\mu}\left[-2\omega\bigl(\mathbf{v}^*-\mathbf{v}(\rho), \zeta\bigr)\right]+\rho^2\frac{\pi}{4\mu^2}|\zeta|^2+\bar{z}_2\right)$$
        $$=\left(\rho\frac{\pi}{\mu}\zeta+\mathbf{v}(\rho), -\rho^2\frac{\pi}{2\mu^2}|\zeta|^2+\rho^2\frac{\pi}{4\mu^2}|\zeta|^2+\bar{z}_2\right)=\left(\rho\frac{\pi}{\mu}\zeta+\mathbf{v}(\rho), -\rho^2\frac{\pi}{4\mu^2}|\zeta|^2+\bar{z}_2\right);$$

        \item[(c)] Since  $\mathbf{v}^*$ and $\rho\frac{\pi}{2\mu}\xi+\mathbf{v}(\rho)$ have the same norm,  there exists $t_0>0$ such that $R_{\mu t_0}\left(\rho\frac{\pi}{2\mu}\xi+\mathbf{v}(\rho)\right)=\mathbf{v}^*,$ and hence
        $$\phi\left(t_0, \phi\left(\tau_0, (\mathbf{v}^*, \bar{z}_2), \rho\right), 0\right)=\phi\left(t_0, \left(\rho\frac{\pi}{\mu}\zeta+\mathbf{v}(\rho), -\rho^2\frac{\pi}{4\mu^2}|\zeta|^2+\bar{z}_2\right), 0\right)$$
        $$=\left(R_{\mu t_0}\left(\rho\frac{\pi}{2\mu}\xi+\mathbf{v}(\rho)\right), -\rho^2\frac{\pi}{4\mu^2}|\zeta|^2+\bar{z}_2\right)=\left(\mathbf{v}^*, -\rho^2\frac{\pi}{4\mu^2}|\zeta|^2+\bar{z}_2\right);$$

        \item[(d)] Repeat items (b) and (c) $n_0$-times to obtain a trajectory connecting $(\mathbf{v}^*, \bar{z}_2)$ to the point $(\mathbf{v}^*, \hat{z}_2)$, where 
        
        $$\hat{z}_2:= -n_0\rho^2\frac{\pi}{4\mu^2}|\zeta|^2+\bar{z}_2\hspace{.5cm}\mbox{ satisfies }\hspace{.5cm}\hat{z}_2\leq \bar{z}_1-z^*;$$

        \item[(e)] Now, with control $u^*$ and time $t^*$ we have that 
        $$\phi(t^*, (\mathbf{v}^*, \hat{z}_2), u^*)=\phi(t^*, (\mathbf{v}^*,0), u^*)+(\mathbf{0}, \hat{z}_2)=(\mathbf{v}(\rho), z^*)+(\mathbf{0}, \hat{z}_2)=(\mathbf{v}(\rho), z^*+\hat{z}_2);$$

        \item [(f)] With time  $t_3:=\frac{2\mu}{\rho^2|\zeta|^2}(\bar{z}_1-(z^*+\hat{z}_2))\geq 0$ and control $u\equiv \rho$ we get that
        $$\phi(t_3, (\mathbf{v}(\rho), z^*+\hat{z}_2), \rho)=\left(\mathbf{v}(\rho), t_3\frac{\rho^2}{2\mu}|\zeta|^2+(z^*+\hat{z}_2)\right)=(\mathbf{v}(\rho), \bar{z}_1);$$

 \item[(g)] Now, with control $u_1$ and time $t_1$ we get that $\phi(t_1, (\mathbf{v}(\rho), \bar{z}_1), u_1)=(\mathbf{0}, z_1)$, showing the assertion.
       
\end{itemize}

(3) Let us now consider the case where $A$ admits a pair of real eigenvalues. A simple analysis on the characteristic polynomial of $A$, under the assumption that $\tr A=0$ and $\det A\neq 0$, implies necessarily that, on some orthonormal basis $\{\mathbf{e}_1, \mathbf{e}_2\}$, 
    $$A=\left(\begin{array}{cc}
        \mu & 0 \\
       0  & -\mu
    \end{array}\right), \hspace{.5cm}\mbox{ where }\hspace{.5cm}\mu=\sqrt{|\det A|},$$
and hence, the system is given as
$$\left\{\begin{array}{l}
     \dot{x}=\mu x+u\zeta_1\\
     \dot{y}=-\mu y+u\zeta_2\\
     \dot{z}=\frac{u}{2}(\zeta_1y-\zeta_2x)
\end{array}\right., \hspace{.5cm}\mbox{ where }\hspace{.5cm}\zeta=(\zeta_1, \zeta_2), \hspace{.5cm}\mbox{ with }\hspace{.5cm}\zeta_1\zeta_2\neq 0.$$

The diffeomorphism
$$f:\HB \rightarrow \R^3, \hspace{1cm} f(x, y, z)=\left(\frac{\mu}{\zeta_1}x, \frac{\mu}{\zeta_2}y, \frac{\mu^2}{\zeta_1\zeta_2}\left(z+\frac{1}{2}xy\right)\right),$$
conjugates $\Sigma_{\HB}$ to the control-affine system
$$\hspace{-2cm}(\Sigma_{\R^3})\hspace{2cm}\left \{
\begin{array}
[c]{l}%
\dot{x}=\mu(x+u)\\
\dot{y}=\mu(-y+u)\\
\dot{z}=u\mu y
\end{array}
\right.,$$
whose solutions for constant control functions are
$$\phi(t, (\mathbf{
    v}_0, z_0), u)=\left(\rme^{\mu t}(x_0+u)-u, \rme^{-\mu t}(y_0-u)+u, z_0+uy_0(1-\rme^{-\mu t})+u^2(\rme^{-\mu t}+\mu t-1)\right).$$

The projection of $\Sigma_{\R^3}$ onto the first two components is a linear control system on $\R^2$ whose unique control set is given by (see \cite[Theorem 3.6]{DSAyAR})
$$\CC_{\R^2}=-\inner\Omega\times\Omega.$$
    
As a consequence, any control set for $\Sigma_{\R^3}$ has to be contained in the cylinder $\CC_{\R^2}\times\R$. 

Since 
$$\CC_{\R^2}\subset \R\times\Omega \times\R\hspace{.5cm}\mbox{ and }\hspace{.5cm} \phi_{S, {\bf u}}(\R\times\Omega \times\R)\subset \R\times\Omega \times\R,$$
for any $S\geq 0$, it is enough to show that the only control set in $\R\times\Omega \times\R$ is the singleton $\{(0, 0, 0)\}$.

For this, define the function $$G:\R\times\Omega \times\R\rightarrow\R, \hspace{1cm} G(x, y, z):=z+\sigma y+\sigma^2\ln(y-\sigma),$$
where $\sigma<u$ for all $u\in\Omega$. Consider $u\in\Omega$ and use the notation $\phi(S, (\mathbf{v}_0, z_0), u)=(x_S, y_S, z_S)$. 
Then,
$$\frac{d}{dS}G(\phi(S, (\mathbf{v}_0, z_0), u))=\dot{z}_S+\sigma\dot{y}_S+\sigma^2\frac{\dot{y}_S}{y_S-\sigma}=u\mu y_S+\sigma\mu(-y_S+u)+\sigma^2\frac{\mu(-y_S+u)}{y_S-\sigma}$$
$$=\mu\frac{(y_S-\sigma)(uy_S+\sigma(-y_S+u))+\sigma^2(-y_S+u)}{y_S-\sigma}=\mu\frac{(u-\sigma)}{y_S-\sigma}y_S^2\geq 0,$$
showing that $G(\phi(S, (\mathbf{v}_0, z_0), u))> G(\mathbf{v}_0, z_0)$ if $y_0^2+u^2\neq 0$. Arguing as in item (2) of Theorem \ref{bothzeros} we get that the line $z=0$ is a continuum of one-point control sets of $\Sigma_{\R^3}$. Since the conjugation $f$ takes $\ker\DC$ over such line, the result follows.
\end{proof}

\subsection{The case $\det A=0$ and $\tr A\neq 0$}

On the basis that diagonalizes $A$ we get the system

$$\left\{\begin{array}{l}
     \dot{x}=u\zeta_1\\
     \dot{y}=\mu y+u\zeta_2\\
     \dot{z}=\mu z+\frac{u}{2}(\zeta_1y-\zeta_2x)+(\eta_1x+\eta_2 y)
\end{array}\right., \hspace{.5cm}\mbox{ where }\hspace{.5cm}\zeta=(\zeta_1, \zeta_2), \hspace{.5cm}\mbox{ with }\hspace{.5cm}\zeta_1\zeta_2\neq 0,$$
and $\mu=\tr A$. Moreover, 

$$\mbox{ ad-rank condition} \iff \eta_2\neq 0.$$

The diffeomorphism
$$f:\HB\rightarrow\R^3, \hspace{1cm}f(x, y, z)=\left(\frac{\mu}{\zeta_1}x, \frac{\mu}{\zeta_2}y, \frac{\mu^2}{\zeta_1\zeta_2}\left(z+\frac{1}{2}xy+\frac{\eta_1}{\mu\zeta_2}\left(x-y\right)\right)\right)$$
conjugate the system to the control-affine system 

$$(\Sigma_{\R^3})\hspace{1cm}\left\{\begin{array}{l}
     \dot{x}=\mu u\\
     \dot{y}=\mu (y+u)\\
     \dot{z}=\mu \left(z+uy+\frac{\eta_2}{\zeta_1}y\right)
\end{array}\right.$$
By writing the points in $\R^3$ as $(z, \mathbf{w})\in\R\times\R^2$, the solutions of the previous system, for constant control, are given by 
\begin{equation}
\label{solucao}
    \phi(S, (x, \mathbf{w}), u)=(x+\mu u S, \phi_2(S, \mathbf{w}, u))
\end{equation}
where $\phi_2$ is the solution of the associated system 
$$(\Sigma^A_{\R^2})\hspace{1cm}\left\{\begin{array}{l}
     \dot{y}=\mu (y+u)\\
     \dot{z}=\mu \left(z+uy+\alpha y\right)
\end{array}\right., \hspace{.5cm}\mbox{ and for simplicity we put }\hspace{.5cm}\alpha=\frac{\eta_2}{\zeta_1}.$$
Note that formula (\ref{solucao}) tells us that $\phi$ is linear on the first component. Moreover, systems $\Sigma_{\R^3}$ and $\Sigma_{\R^2}^A$ are conjugated by the canonical projection of $\R^3$ onto the last two components. In particular, we can relate the control set of both systems.

 Due to the previous, let us start by showing the  that the associated system $\Sigma^A_{\R^2}$ admits a unique control set with a nonempty interior. Write
$$A:=\mu\left(\begin{array}{cc}
    1 & 0  \\
    \alpha & 1
    \end{array}\right), \hspace{.5cm}B:=\mu\left(\begin{array}{cc}
    0 & 0  \\
    1 & 0
    \end{array}\right)\hspace{.5cm}\mbox{ and }\hspace{.5cm}C:=\mu\left(\begin{array}{c}
    1  \\
    0
    \end{array}\right).$$
    By considering $\mathbf{w}=(x, y)$ and $A(u):=A+uB$, the system $\Sigma_{\R^2}^A$ can be written, in matricial form, as an affine systems (in the sense of the paper \cite{FCASJS}) 
$$\dot{\mathbf{w}}=A(u)\mathbf{w}+Cu, \hspace{.5cm}u\in\Omega.$$
Since $\det A(u)\neq 0$ for all $u\in\Omega$, the set of equilibria of the system is given by 
$$\mathcal{E}:=\{\mathbf{w}(u):=-A(u)^{-1}Cu,\hspace{.2cm} u\in\Omega\}=\{(-u, u(u+\alpha)), \hspace{.2cm} u\in\Omega\}.$$
Moreover, the vectors
$$B'(u):=C+B\mathbf{w}(u)=\mu\left(\begin{array}{c}
    1  \\
    -u
    \end{array}\right)\hspace{.5cm}\mbox{ and }\hspace{.5cm}A(u)B'(u)=\mu^2\left(\begin{array}{c}
    1  \\
    \alpha
    \end{array}\right),$$
are linearly dependent if and only if $u=-\alpha$. From \cite[Proposition 5.2]{FCASJS}, we conclude that 
$$\OC^+(\mathbf{w}(u))\hspace{.5cm}\mbox{ and }\hspace{.5cm}\OC^-(\mathbf{w}(u))\hspace{.5cm}\mbox{ are open sets for all }\hspace{.5cm}u\in \inner\Omega\setminus \{-\alpha\}.$$
  In particular, for any $u\in \inner\Omega\setminus \{-\alpha\}$ there exists a control set $D_u$ such that $\mathbf{w}(u)\in\inner D_u$.

\begin{proposition}
\label{controlaffine}
    With the previous notations, the affine control system
    $$\dot{\mathbf{w}}=A(u)\mathbf{w}+Cu, \hspace{.5cm}u\in\Omega,$$
admits a control set with a nonempty interior $\CC^A_{\R^2}$ satisfying $\mathcal{E}\subset\overline{\CC^A_{\R^2}}$. Morever, $\CC^A_{\R^2}$ is open if $\mu>0$ and closed if $\mu<0$.
\end{proposition}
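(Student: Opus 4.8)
The plan is to exploit the closed-form solutions of $\Sigma^A_{\R^2}$ together with the source/sink structure of its equilibria. Since $A(u)=A+uB$ is lower triangular with both eigenvalues equal to $\mu$, each equilibrium $\mathbf{w}(u)$ is a repeller when $\mu>0$ and an attractor when $\mu<0$, and this single dichotomy will drive the entire statement. From the discussion preceding the proposition we already know that for every $u\in\inner\Omega\setminus\{-\alpha\}$ the orbits $\OC^{\pm}(\mathbf{w}(u))$ are open, so $\mathbf{w}(u)$ lies in the interior of a control set $D_u$; what remains is to show that all the $D_u$ coincide, that the limiting equilibria $\mathbf{w}(u_*),\mathbf{w}(u^*),\mathbf{w}(-\alpha)$ lie in the closure of this common set, and to decide whether it is open or closed.

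First I would establish that $D_{u_1}=D_{u_2}$ for all $u_1,u_2\in\inner\Omega\setminus\{-\alpha\}$. By the characterization recalled in the preliminaries, a control set containing $x$ in its interior equals $\overline{\OC^+(x)}\cap\OC^-(x)$, so two control sets with a common interior point must coincide; it therefore suffices to check that $\mathbf{w}(u_2)\in\inner\OC^+(\mathbf{w}(u_1))\cap\inner\OC^-(\mathbf{w}(u_1))$. I would produce the required trajectories by concatenating pieces with constant controls, exactly in the spirit of the explicit constructions in Theorem \ref{bothzeros} and Proposition \ref{fiber}: the scalar equation $\dot y=\mu(y+u)$ moves the first coordinate freely inside $(-u^*,-u_*)$, while the closed-form expression for the $z$-equation supplies the remaining freedom to match the second coordinate. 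Continuity of $u\mapsto\mathbf{w}(u)$ together with openness of the orbits then makes $u\mapsto D_u$ locally constant, hence constant on each connected component of $\inner\Omega\setminus\{-\alpha\}$; when $-\alpha\in\inner\Omega$ the two resulting components are glued by a single explicit trajectory that avoids the degenerate value $u=-\alpha$. Writing $\CC^A_{\R^2}$ for this common control set, every interior equilibrium lies in $\inner\CC^A_{\R^2}$, and since $\mathbf{w}(u_*),\mathbf{w}(u^*),\mathbf{w}(-\alpha)$ are limits of interior equilibria we obtain $\mathcal{E}\subset\overline{\CC^A_{\R^2}}$.

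The genuinely delicate part is the open/closed dichotomy, which I expect to be the main obstacle. For $\mu<0$ I would show that $\CC^A_{\R^2}$ is an invariant control set: the contracting closed-form flow keeps forward orbits of points of $\CC^A_{\R^2}$ inside $\overline{\CC^A_{\R^2}}$, and each extreme equilibrium $\mathbf{w}(u_*)$, $\mathbf{w}(u^*)$ --- an attractor for its own constant control, yet able to re-enter the interior under a slightly perturbed control --- satisfies both conditions of Definition \ref{controlset}; invariance then forces $\CC^A_{\R^2}=\overline{\CC^A_{\R^2}}$, i.e.\ closedness. For $\mu>0$ the analogous analysis of the time-reversed system, whose matrices carry the eigenvalue $-\mu<0$, shows instead that from a source equilibrium one cannot return to the interior, so no boundary point fulfils condition (2) of Definition \ref{controlset} and $\CC^A_{\R^2}$ coincides with its interior, i.e.\ it is open. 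The quantitative control of the reachable sets near the boundary equilibria is where the real work lies; to carry it out rigorously I would either invoke the structural results on planar affine systems of \cite{FCASJS} or, mirroring the monotone-function technique already used in this paper, build a functional that is strictly monotone along trajectories near each extreme equilibrium and thereby pin down exactly which boundary points belong to $\CC^A_{\R^2}$.
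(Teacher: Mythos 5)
Your first two steps coincide in substance with the paper's proof. There, too, the identification $D_{u_1}=D_{u_2}$ rests on the attractor/repeller dichotomy you announce at the outset: since $\mu$ is the only eigenvalue of $A(u)$, one has $\phi(t,\mathbf{w},u)\to\mathbf{w}(u)$ as $\mu t\to-\infty$, so a single constant-control trajectory connects $\mathbf{w}(u_2)$ with $\inner D_{u_1}$ (in the appropriate time direction) and conversely, for \emph{any} pair $u_1,u_2\in\inner\Omega\setminus\{-\alpha\}$. This makes your more laborious route --- matching the second coordinate through the closed-form solutions, local constancy of $u\mapsto D_u$, and a separate gluing trajectory across $u=-\alpha$ --- unnecessary, though not wrong in spirit. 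The inclusion $\mathcal{E}\subset\overline{\CC^A_{\R^2}}$ is then obtained exactly as you say, by continuity of $u\mapsto\mathbf{w}(u)$.

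The genuine gap is the open/closed dichotomy, which is the actual content of the ``Moreover'' clause and which you explicitly leave as a plan. Both mechanisms you sketch are restatements of the conclusion rather than proofs of it: for $\mu<0$, showing that $\CC^A_{\R^2}$ is an invariant control set already requires knowing that positive orbits of points in its closure stay in $\overline{\CC^A_{\R^2}}$ (and that invariant control sets are closed in this noncompact setting, which itself needs justification); for $\mu>0$, the assertion that ``from a source equilibrium one cannot return to the interior'' is logically equivalent to the openness you are trying to prove, since $\mathbf{w}(u_*)\in\overline{\CC^A_{\R^2}}\subset\overline{\OC^+(\mathbf{w}(u))}$, so $\mathbf{w}(u_*)$ belongs to the control set precisely when it can be steered back into the interior; without further input the argument is circular. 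The paper closes this circle with one concrete ingredient you gesture at but never pin down: by \cite[Lemma 5.8]{FCASJS}, $\OC^+(\mathbf{w}(u))=\R^2$ when $\mu>0$ and $\OC^-(\mathbf{w}(u))=\R^2$ when $\mu<0$. Plugged into the characterization $\CC^A_{\R^2}=D_u=\overline{\OC^+(\mathbf{w}(u))}\cap\OC^-(\mathbf{w}(u))$ recalled in the preliminaries, this gives the dichotomy in one line: for $\mu>0$ the control set equals $\OC^-(\mathbf{w}(u))$, which is open by the discussion preceding the proposition, and for $\mu<0$ it equals $\overline{\OC^+(\mathbf{w}(u))}$, which is closed. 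Your fallback of a strictly monotone functional near the extreme equilibria is moreover doubtful: in this paper such functionals serve to show that control sets degenerate to singletons, and it is unclear how local monotonicity near $\mathbf{w}(u_*)$ could decide membership of boundary points --- the decisive information is global (whether the positive or negative orbit of an interior equilibrium is all of $\R^2$), not local.
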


\begin{proof}
Let us start by showing that the control sets $D_u$ coincides for $u\in\inner\Omega\setminus\{-\alpha\}$. Since $\mu$ is the only eigenvalue of $A(u)$, it holds that 
$$\forall \mathbf{w}\in\R^2, u\in\Omega \hspace{.5cm}\phi(t, \mathbf{w}, u)\rightarrow \mathbf{w}(u), \hspace{.5cm} \mu t\rightarrow-\infty.$$
Therefore, for $u_1, u_2\in\inner\Omega\setminus\{-\alpha\}$ there exists $t_1, t_2\in\R$ with $\mu t_1, \mu t_2>0$, such that 
$$\phi(t_1, \mathbf{w}(u_2), u_1)\in D_{u_1}\hspace{.5cm}\mbox{ and }\hspace{.5cm}\phi(t_2, \mathbf{w}(u_1), u_2)\in D_{u_2},$$
implying that $D_{u_1}=D_{u_2}$. Therefore, $\CC^A_{\R^2}:= D_u$ for $u\in\inner \Omega\setminus\{-\alpha\}$ is a well defined control set with a nonempty interior of the system satisfying $\mathcal{E}\subset \overline{\CC^A_{\R^2}}$. Moreover, by \cite[Lemma 5.8]{FCASJS}, it holds that $\OC^+(\mathbf{w}(u))=\R^2$ (resp. $\OC^-(\mathbf{w}(u))=\R^2$) if $\mu>0$ (resp. $\mu<0$). Since, 
$$\CC^A_{\R^2}=D_u=\overline{\OC^+(\mathbf{w}(u))}\cap\OC^-(\mathbf{w}(u)),$$
we conclude that $\CC^A_{\R^2}$ is open if $\mu>0$ and closed if $\mu<0$.
\end{proof}

We can now prove the main result of this section.

\begin{theorem}
    If the derivation drift of the linear control system $\Sigma_{\HB}$ satisfies $\det A=0$ and $\tr A\neq 0$ then, up to conjugations, 
 $$\CC_{\HB}=\pi^{-1}(\CC^A_{\R^2}),$$
 is the unique control set of $\Sigma_{\HB}$, where $\pi:\R\times\R^2\rightarrow\R^2$ is the canonical projection onto the last two components.
\end{theorem}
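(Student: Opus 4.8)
The plan is to carry out the whole argument in the conjugated coordinates $\Sigma_{\R^3}$, exploiting the skew-product structure revealed by the solution formula (\ref{solucao}): the pair $(y,z)$ evolves autonomously according to $\Sigma^A_{\R^2}$, while the extra coordinate merely accumulates $x(S)=x+\mu u S$. Since $f$ is a diffeomorphism and $\Sigma_{\HB}$ satisfies the LARC, so does $\Sigma_{\R^3}$, and by Proposition \ref{controlaffine} the base system $\Sigma^A_{\R^2}$ possesses a unique control set $\CC^A_{\R^2}$ with nonempty interior. My goal is then to apply Proposition \ref{CCC} to the conjugating projection $\pi$: part (1) already forces every control set of $\Sigma_{\R^3}$ to project into $\CC^A_{\R^2}$, hence to be contained in the cylinder $\pi^{-1}(\CC^A_{\R^2})$; it remains to produce a control set filling the whole cylinder and to verify the interior hypothesis of part (2).

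The crux is to establish \emph{fiber controllability}: for every $\mathbf{w}_0\in\inner\CC^A_{\R^2}$ and all $x_1,x_2\in\R$ there is a positive-time trajectory of $\Sigma_{\R^3}$ joining $(x_1,\mathbf{w}_0)$ to $(x_2,\mathbf{w}_0)$. The key observation is that at an equilibrium $\mathbf{w}(u)$ of $\Sigma^A_{\R^2}$, with the constant control $u$, the planar component stays frozen while the fiber coordinate drifts as $\mu u S$. I would pick two admissible control values $u_+\in(0,u^*)\setminus\{-\alpha\}$ and $u_-\in(u_*,0)\setminus\{-\alpha\}$; since $\mathbf{w}(u_\pm)\in\inner\CC^A_{\R^2}$ and $\mu\neq 0$, sitting at these two equilibria produces fiber drifts $\mu u_+$ and $\mu u_-$ of opposite signs (regardless of the sign of $\mu$). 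Using the exact controllability available in $\inner\CC^A_{\R^2}$ (valid under the LARC) to transit from $\mathbf{w}_0$ to each equilibrium and back, I would concatenate a loop through $\mathbf{w}(u_+)$ with sitting time $S_+$ and a loop through $\mathbf{w}(u_-)$ with sitting time $S_-$; the resulting net fiber displacement takes the form $d+\mu u_+S_++\mu u_-S_-$ with $d$ a fixed constant, which ranges over all of $\R$ as $S_+,S_-\geq 0$ vary. Choosing $S_+,S_-$ to realize $x_2-x_1$ yields the desired trajectory, exactly in the spirit of the concatenation arguments used in Theorem \ref{bothzeros} and Proposition \ref{fiber}.

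With fiber controllability in hand, I would combine it with the approximate controllability on $\inner\CC^A_{\R^2}$ to conclude that every point of $\pi^{-1}(\inner\CC^A_{\R^2})$ lies in the closure of the positive orbit of any other such point, so this set sits inside a single control set $\CC_{\R^3}$; the product-type form of (\ref{solucao}) together with the LARC then upgrades this to $\pi^{-1}(\mathbf{w}_0)\subset\inner\CC_{\R^3}$ for $\mathbf{w}_0\in\inner\CC^A_{\R^2}$, since near such a point one reaches an open set of $(y,z)$-values and, independently, an interval of fiber values. Proposition \ref{CCC}(2) then gives $\CC_{\R^3}=\pi^{-1}(\CC^A_{\R^2})$. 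For uniqueness, any control set $\CC'$ of $\Sigma_{\R^3}$ with nonempty interior projects into $\CC^A_{\R^2}$ by part (1), and an interior point of $\CC'$ projects into $\inner\CC^A_{\R^2}$ by the orbit characterization of interiors under the LARC; hence $\CC'$ meets $\inner\CC_{\R^3}$ and, by maximality, coincides with it. Pulling everything back through the diffeomorphism $f$ yields the claim for $\Sigma_{\HB}$. I expect the main obstacle to be precisely the fiber-controllability step — arranging the equilibrium-sitting loops so that both signs of drift are available and the net displacement sweeps all of $\R$ — while promoting mere membership in a control set to membership in its \emph{interior} is the delicate secondary point, resolved through the LARC and the skew-product form of the flow.
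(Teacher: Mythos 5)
Your proposal is correct and takes essentially the same route as the paper: the authors likewise work in the conjugated skew-product coordinates of $\Sigma_{\R^3}$, establish fiber controllability by sitting at two equilibria $\mathbf{w}(u_1), \mathbf{w}(u_2)\in\inner\CC^A_{\R^2}$ with $\mu u_1>0>\mu u_2$, so that the opposite-sign fiber drifts $\mu u_i S$ combined with transit legs (exact controllability in $\inner\CC^A_{\R^2}$) sweep every net displacement in $\R$, and then conclude $\CC_{\HB}=\pi^{-1}(\CC^A_{\R^2})$ via Proposition \ref{CCC}(2). The only cosmetic difference is that the paper proves fiber controllability over the equilibrium fibers and transports it using the fact that the flow maps fibers onto fibers, whereas you run the drift loops from an arbitrary interior base point directly.
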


\begin{proof}
In order to show the result, we need to prove the followings:

(1) For any $\mathbf{w}_1, \mathbf{w}_2\in\inner\CC^A_{\R^2}$ there exists $t_0>0$ and $ u\in\UC$ such that 
$$\phi_{t_0, u}(\R\times \{\mathbf{w}_1\})=\R\times \{\mathbf{w}_2\}.$$

In fact, by exact controllability in $\inner\CC^A_{\R^2}$, there exists $t_0>0$ and $ u\in\UC$ such that 
$$\phi_2(t_0, \mathbf{w}_1, u)=\mathbf{w}_2\hspace{.5cm}\implies\hspace{.5cm}\phi_{t_0, u}(\R\times\{\mathbf{w}_1\})\subset\R\times\{\mathbf{w}_2\}.$$

Since $\phi_{t_0, {\bf u}}:\R^3\rightarrow\R^3$ is a diffeomorphism, the equality holds.

(2) For any $u\in\inner\Omega\setminus\left\{-\frac{\eta_2}{\zeta_1}\right\}$ the fiber $\pi^{-1}(\mathbf{w}(u))$ is controllable, where $\mathbf{w}(u)$ is the equilibria of $\Sigma^A_{\R^2}$.

In fact, by Proposition \ref{controlaffine}, the control set $\CC^A_{\R^2}$ satisfy
$$\mathbf{w}(u)\in\inner\CC_{\R^2}, \hspace{.5cm}\forall u\in\inner\Omega\setminus\left\{-\frac{\eta_2}{\zeta_1}\right\}$$

Let then $u_1\in\inner\Omega\setminus\left\{-\frac{\eta_2}{\zeta_1}\right\}$ and assume w.l.o.g. that $\mu u_1>0$, since the other case is analogous. Let $u_2\in \inner\Omega\setminus\left\{-\frac{\eta_2}{\zeta_1}\right\}$ such that $\mu u_2<0$ and consider $t_1, t_2>0$ and $u_1^*, u_2^*\in\UC$ such that 
$$\phi_{t_1, u^*_1}(\R\times \{\mathbf{w}(u_1)\})=\R\times\{\mathbf{w}(u_2)\}\hspace{.5cm}\mbox{ and }\hspace{.5cm}\phi_{t_2, u^*_2}(\R\times \{\mathbf{w}(u_2)\})=\R\times\{\mathbf{w}(u_1)\}.$$
For any given $x, y\in\R$ with $x<y$ let us consider 
$x', y'\in\R$ such that 
$$\phi(t_1, (y, \mathbf{w}(u_1)), u_1^*)=(y', \mathbf{w}(u_2))\hspace{.5cm}\mbox{ and \hspace{.5cm}}\phi(t_2, (x', \mathbf{w}(u_2)), u^*_2)=(x, \mathbf{w}(u)).$$

Moreover, let $z> 0$ such that $z+y'> x'$. A periodic trajectory passing through $(x, \mathbf{w}(u_1))$ and $(y, \mathbf{w}(u_1))$ is constructed as follows:

\begin{enumerate}
    
    \item[(a)] Starting in $(x, \mathbf{w}(u_1))$ with control constant $u_1$ and time $t_3=\frac{z+y-x}{\mu u_1}>0$ we have that
    $$\phi(t_3, (x, \mathbf{w}(u_1)), u_1)=(x+\mu u_1 t_3, \mathbf{w}(u_1))=(z+y, \mathbf{w}(u_1)).$$
    Note that this curve passes through $(y, \mathbf{w}(u_1))$ when $t=\frac{y-x}{\mu u_1}$; 

 \item[(b)] With control $u_1^*$ and time $t_1>0$, go from $(z+y, \mathbf{w}(u_1))$ to the point
 $$\phi(t_1, (z+y, \mathbf{w}(u_1)), u_1^*)=(z, 0)+\phi(t_1, (y, \mathbf{w}(u_1)), u_1^*)=(z, 0)+(y', \mathbf{w}(u_2))=(z+y', \mathbf{w}(u_2));$$

 \item[(c)] With constant control $u_2$ and time $t_4=\frac{x'-y'-z}{\mu u_2}>0$ we go from $(z+y', \mathbf{w}(u_2))$ to
 $$\phi(t_4, (z+y', \mathbf{w}(u_2)), u_2)=(z+y'+\mu u_2t_4, \mathbf{w}(u_2))=(x', \mathbf{w}(u_2));$$

 \item[(d)] Now, with control $u_2^*$ and time $t_2>0$, we go from $(x', \mathbf{w}(u_2))$ to $(x, \mathbf{w}(u_1))$, showing the claim.
\end{enumerate}

(3) $\CC_{\HB}=\pi^{-1}(\CC_{\R^2})$ is a control set.
In fact, using the previous item, one can easily show that  $\pi^{-1}(\inner\CC_{\R^2})$ satisfies properties (1) and (2) in the definition of control sets. In particular, there exists a control set $\CC_{\HB}$ such that $\pi^{-1}(\inner\CC_{\R^2})\subset\CC_{\HB}$. However, by Proposition \ref{CCC} and the previous item, it holds that $\pi^{-1}(\CC_{\R^2})=\CC_{\HB}$ as stated.

\end{proof}


\begin{thebibliography}{99}
\bibitem {DSAyAR}
\newblock V. Ayala, A. Da Silva and A.F.P. Rojas. 
\newblock \emph{Control sets of linear control systems on $\R^2$. The real case}, 
\newblock Nonlinear Differ. Equ. Appl. 31, 94 (2024). https://doi.org/10.1007/s00030-024-00987-8.

\bibitem {DSAyEM}
\newblock V. Ayala, A. Da Silva and E. Mamani. 
\newblock \emph{Control sets of linear control systems on $\R^2$. The complex case}, 
\newblock ESAIM: Control Optimization and Calculus of Variations, 29 (2023) 1-16. https://doi.org/10.1051/cocv/2023043

\bibitem {DSAy}
\newblock V. Ayala and A. Da Silva, 
\newblock \emph{Controllability of Linear Control Systems on Lie Groups with Semisimple Finite Center}, 
\newblock SIAM Journal on Control and Optimization 55 No 2 (2017), 1332-1343.
	


\bibitem {VAD1}
	\newblock V. Ayala and A. Da Silva, 
	\newblock \emph{The control set of a linear control system on the two
dimensional lie group}, 
	\newblock Journal of Differential Equations, vol. 268, no. 11, 6683–6701, 2020.

\bibitem {VAD2}
	\newblock V. Ayala and A. Da Silva, 
	\newblock \emph{On the characterization of the controllability property for linear control systems on nonnilpotent, solvable three-dimensional lie groups}, 
	\newblock Journal of Differential Equations, vol. 266, no. 12, 8233–8257, 2019.
    

\bibitem{VAJT} 
\newblock V. Ayala and J. Tirao, 
\newblock\emph{Linear Control Systems on Lie Groups and Controllability.} 
\newblock American Mathematical Society, Series: Symposia in Pure Mathematics, Vol. 64, pp. 47-64, 1999.

\bibitem{FCASJS} 
\newblock F. Colonius, A.J. Santana and J. Setti,
\newblock \emph{Control sets for bilinear and affine systems}.
\newblock Math. Control Signals Syst. 34, 1–35 (2022). https://doi.org/10.1007/s00498-021-00311-0

\bibitem{FCWK} 
\newblock F. Colonius and W. Kliemann,
\newblock \emph{The Dynamics of Control.}
\newblock Birkh\"{a}user, 2000.

\bibitem{DS}
	\newblock A. Da Silva, 
	\newblock \emph{Controllability of linear systems on solvable Lie groups},
	\newblock SIAM Journal on Control and Optimization 54 No 1 (2016), 372-390.

    \bibitem{DS1}
	\newblock A. Da Silva and A.F.P. Rojas,  
	\newblock \emph{Weak condition for the existence of control sets with a nonempty interior for linear control systems on nilpotent groups}.
	\newblock  Math. Control Signals Syst. 37, 61–79 (2025). https://doi.org/10.1007/s00498-024-00395-4

\bibitem{DDK} 
\newblock A. Da Silva, O. Duman and E. Kizil, 
\newblock\emph{One-input linear control systems on the homogeneous spaces of the Heisenberg group-the singular case.} 
\newblock Journal of Differential Equations, 407, pp.133-152.

\bibitem{PJMD} 
\newblock M. Dath and P. Jouan, 
\newblock\emph{Controllability of Linear Systems on Low Dimensional Nilpotent and Solvable Lie Groups.} 
\newblock J Dyn Control Syst 22, 207–225 (2016). https://doi.org/10.1007/s10883-014-9258-z
    
\bibitem{leit} 
\newblock G. Leitmann,
\newblock \emph{Optimization techniques: with applications to aerospace systems.}
\newblock Academic Press, 1962.


\bibitem{markus} 
\newblock L. Markus,
\newblock \emph{Controllability of multi-trajectories on Lie groups, in: Proceedings of Dynamical Systems and Turbulence.}
\newblock  Lecture Notes in Mathematics, , vol.898, pp.250–265, 1962.

    
    

\bibitem{JPh1} 
\newblock P. Jouan, 
\newblock\emph{Equivalence of Control Systems with Linear Systems on Lie Groups and Homogeneous Spaces.} 
\newblock ESAIM: Control Optimization and Calculus of Variations, 16 (2010) 956-973.


\bibitem{shell} 
\newblock K. Shell, 
\newblock\emph{Applications of Pontryagin’s Maximum Principle to Economics, Mathematical Systems Theory and Economics I and II.} 
\newblock Lecture Notes in Operations Research and Mathematical Economics, 11, (1968), 241–292.


\bibitem{labsanmartin} A. dos Santos, L. Ariane and L. A.B. San Martin. 
\newblock\emph{Controllability of control systems on complex simple lie groups and the topology of flag manifolds.}
\newblock Journal of Dynamical and Control Systems 19, no. 2 (2013): 157-171.



\end{thebibliography}
\end{document}